\begin{document}
\setlength{\baselineskip}{14pt}

\parindent 0.5cm
\evensidemargin 0cm \oddsidemargin 0cm \topmargin 0cm \textheight 22cm \textwidth 16cm \footskip 2cm \headsep
0cm

\newtheorem{theorem}{Theorem}[section]
\newtheorem{lemma}{Lemma}[section]
\newtheorem{proposition}{Proposition}[section]
\newtheorem{definition}{Definition}[section]
\newtheorem{example}{Example}[section]
\newtheorem{corollary}{Corollary}[section]

\newtheorem{remark}{Remark}[section]

\numberwithin{equation}{section}

\def\p{\partial}
\def\I{\textit}
\def\R{\mathbb R}
\def\C{\mathbb C}
\def\l{\lambda}
\def\a{\alpha}
\def\O{\Omega}
\def\e{\epsilon}
\def\ls{\lambda^*}
\def\D{\displaystyle}
\def\wyx{ \frac{w(y,t)}{w(x,t)}}
\def\imp{\Rightarrow}
\def\tE{\tilde E}
\def\tX{\tilde X}
\def\tH{\tilde H}
\def\tu{\tilde u}
\def\d{\mathcal D}
\def\aa{\mathcal A}
\def\DH{\mathcal D(\tH)}
\def\bE{\bar E}
\def\bH{\bar H}
\def\M{\mathcal M}
\renewcommand{\labelenumi}{(\arabic{enumi})}

\def\disp{\displaystyle}
\def\undertex#1{$\underline{\hbox{#1}}$}
\def\card{\mathop{\hbox{card}}}
\def\sgn{\mathop{\hbox{sgn}}}
\def\exp{\mathop{\hbox{exp}}}
\def\OFP{(\Omega,{\cal F},\PP)}
\newcommand\JM{Mierczy\'nski}
\newcommand\RR{\ensuremath{\mathbb{R}}}
\newcommand\CC{\ensuremath{\mathbb{C}}}
\newcommand\QQ{\ensuremath{\mathbb{Q}}}
\newcommand\ZZ{\ensuremath{\mathbb{Z}}}
\newcommand\NN{\ensuremath{\mathbb{N}}}
\newcommand\PP{\ensuremath{\mathbb{P}}}
\newcommand\abs[1]{\ensuremath{\lvert#1\rvert}}

\newcommand\normf[1]{\ensuremath{\lVert#1\rVert_{f}}}
\newcommand\normfRb[1]{\ensuremath{\lVert#1\rVert_{f,R_b}}}
\newcommand\normfRbone[1]{\ensuremath{\lVert#1\rVert_{f, R_{b_1}}}}
\newcommand\normfRbtwo[1]{\ensuremath{\lVert#1\rVert_{f,R_{b_2}}}}
\newcommand\normtwo[1]{\ensuremath{\lVert#1\rVert_{2}}}
\newcommand\norminfty[1]{\ensuremath{\lVert#1\rVert_{\infty}}}

\title{Approximations of Random Dispersal Operators/Equations by Nonlocal Dispersal Operators/Equations \thanks{Partially supported by NSF grant DMS--0907752}}

%
%
%
%

\author{
Wenxian Shen  \\
Department of Mathematics and Statistics\\
Auburn University\\
Auburn, AL 36849, U.S.A. \\\\
Xiaoxia Xie\\
Department of Applied  Mathematics\\
Illinois Institute of Technology\\
Chicago, IL 60616, U.S.A.
}

\date{}
\maketitle

\noindent {\bf Abstract.}
This paper is concerned with the approximations of random dispersal operators/equations by nonlocal
dispersal operators/equations.  It first proves that the solutions of properly rescaled nonlocal dispersal initial-boundary
value problems converge to the solutions of the corresponding random dispersal initial-boundary value problems. Next,
it proves that the principal spectrum points of nonlocal dispersal operators with properly rescaled kernels converge
to the principal eigenvalues of the corresponding random dispersal operators.
Finally, it proves that the unique positive time periodic solutions of nonlocal dispersal  KPP equations with properly
 rescaled kernels converge to the unique positive time periodic solutions of the corresponding random dispersal KPP equations.

\bigskip

\noindent {\bf Key words.}
 Nonlocal dispersal, random dispersal, KPP equation, principal eigenvalue, principal spectrum point, positive time periodic solution.
\bigskip

\noindent {\bf Mathematics subject classification.} 35K20, 35K57, 45C05, 45J05, 92D25.

\newpage

\section{Introduction}
\setcounter{equation}{0}

Both random dispersal evolution equations (or reaction diffusion equations) and nonlocal dispersal
evolution equations (or differential integral equations) are widely used to model diffusive systems in applied
sciences. Random dispersal equations  of the form
\begin{equation}
\label{random-eq}
\begin{cases}
\p_t u(t, x)=\Delta u(t, x)+F(t,x,u), \quad &x\in D,\cr
B_{r, b} u(t, x)=0,&x\in \p D\,\, (x\in \RR^N\,\, {\rm if}\,\, D=\RR^N),
\end{cases}
\end{equation}
 are usually used to model
 diffusive systems which exhibit local internal interactions (i.e.  the movements of organisms in the systems occur randomly
  between adjacent
spatial locations)  and have been extensively studied (see
\cite{Aro, ArWe1, ArWe2, CaCo, Fif1, Fisher, HessWein, Kolmo, Mur,
Ske, Zha2}, etc.). In \eqref{random-eq},  the domain $D$ is either a bounded smooth domain in $\RR^N$ or $D=\RR^N$. When $D$ is a bounded domain,
 either $B_{r, b}u=B_{r,D}u:=u$ (in such case, $B_{r, D} u=0$ on $\p D$
represents homogeneous Dirichlet boundary condition),  or $B_{r, b}
u=B_{r,N}u:=\frac{\p u}{\p {\bf n}}$ (in such case, $B_{r, N}u=0$
on $\p D$ represents homogeneous Neumann boundary condition),
and when $D=\R^N$, it is assumed that $F(t,x,u)$ is periodic in
$x_j$ with period $p_j$ and $B_{r, b} u=B_{r,P}u:=u(t,x+p_j{\bf
e_j})-u(t,x)$ with ${\bf
e_j}=(\delta_{1j},\delta_{2j},\cdots,\delta_{Nj})$ ($\delta_{ij}=0$
if $i\not =j$ and $\delta_{ij}=1$ if $i=j$) (in such case, $B_{r,
P}u=0$ in $\RR^N$ represents  periodic boundary condition).

Many applied systems exhibit nonlocal internal interaction  (i.e.  the movements of organisms in the systems occur  between non-adjacent spatial locations). Nonlocal dispersal evolution equations
 of the form
 \begin{equation}
 \label{nonlocal-eq}
 \begin{cases}
 \p_t u(t, x)=\nu \int_{ D\cup D_b} k(y-x)[u(t,y)-u(t, x)]dy+F(t,x,u),\quad &x\in \bar D,\cr
 B_{n, b}u(t, x)=0, &x\in D_b \,\, {\rm if}\,\, D_b\not =\emptyset,
 \end{cases}
 \end{equation}
 are often used to model diffusive systems which exhibit nonlocal internal
 interactions and have been recently studied by many people
 (see \cite{BaZh,  CaCoLoRy, ChChRo, CoDaMa, Cov, CoDu, CoDaMa1, Fif, GrHiHuMiVi, HMMV, KaLoSh, LiLoWa, LiSuWa, ZhLiWa}, etc.).
 In \eqref{nonlocal-eq}, $D$ is either a smooth bounded domain of $\RR^N$ or $D=\RR^N$;
 $\nu$ is the dispersal rate;  the kernel function $k(\cdot)$ is a smooth and nonnegative  function   with compact support (the size of the support reflects the dispersal distance) and
  $\int_{\R^N}k(z)dz=1$.
 When $D$ is bounded, either $D_b=D_D:=\RR^N\backslash{\bar D}$
   and $B_{n, b}u=B_{n,D}:=u$ (in such case, $u=0$ on $\RR^N\backslash \bar D$ represents homogeneous Dirichlet type boundary condition), or
   $D_b=D_N:=\emptyset$  (in such case,  nonlocal diffusion takes place only in $\bar D$
  and hence $D_N=\emptyset$ represents homogeneous Neumann  type
 boundary condition);
 when $D=\RR^N$, it is assumed that $F(t,x+p_j{\bf e_j},u)=F(t,x,u)$, $D_b=D_P:=\RR^N$, and $B_{n, b} u=B_{n,P}u:= u(t,x+p_j{\bf e_j})-u(t,x)$ (hence $B_{n, P} u=0$ on $\RR^N$ represents  periodic boundary condition).

Observe that \eqref{nonlocal-eq} with  $D_b=D_D$ and
$B_{n, b} u=B_{n,D}u$ can be rewritten as
\begin{equation}
\label{nonlocal-eq-d}
\p_t u(t, x)=\nu\left[\int_D k(y-x)u(t, y)dy-u(t, x)\right]+F(t, x, u), \quad x\in \bar D;
\end{equation}
that \eqref{nonlocal-eq} with $D_b=D_N$ reduces to
\begin{equation}
\label{nonlocal-eq-n}
\p_t u(t, x)=\nu\int_D k(y-x)\left[u(t, y)-u(t, x)\right]dy+F(t, x, u), \quad x\in\bar D;
\end{equation}
and  that \eqref{nonlocal-eq} with $D=D_P$, $F(t,x,u)$ being periodic
in $x_j$ with period $p_j$, and $B_{n, b} u=B_{n,P}u$ can be written as
\begin{equation}
\label{nonlocal-eq-p}
\begin{cases}
\p_t u(t, x)=\nu\int_{\R^N} k(y-x)\left[u(t, y)-u(t, x)\right]dy+F(t, x, u), \quad & x\in \R^N,\\
u(t, x)=u(t, x+p_j{\bf e_j}), \quad &x\in \R^N
\end{cases}
\end{equation}
$(j=1, 2, \cdots N)$.

A huge amount of research has been carried out toward various dynamical aspects of random dispersal evolution equations of the form
\eqref{random-eq}. There are also many research works toward various dynamical aspects of nonlocal dispersal evolution equations
of the form \eqref{nonlocal-eq}. It has been seen that
 random dispersal evolution equations  with Dirichlet, or  Neumann, or period boundary condition and nonlocal dispersal
  evolution equations with the corresponding boundary condition share many similar properties.
  For example, a comparison principle holds for both equations. There are also many differences between these two types of
  dispersal evolution equations. For example, solutions of random dispersal evolution equations have smoothness and certain
  compactness properties,   but solutions of nonlocal dispersal evolution equations do not have such properties.
 Nevertheless, it is  expected that nonlocal dispersal evolution equations with Dirichlet, or Neumann, or periodic
 boundary condition and small dispersal distance possess similar dynamical behaviors as those of random dispersal evolution  equations
 with the  corresponding boundary condition and that certain dynamics of random dispersal evolution  equations with
 Dirichlet, or Neumann, or periodic boundary condition can be approximated by the dynamics of nonlocal dispersal evolution equations
 with the corresponding boundary condition and properly rescaled kernels.
  It is of great theoretical and practical importance to investigate whether such naturally expected properties actually hold or not.

 The objective of the current paper is  to investigate how the dynamics of random dispersal operators/equations
 can be approximated by those of  nonlocal dispersal
 operators/equations from three different perspectives, that is,  from initial-boundary value problem point of view, from spectral problem point of view, and  from asymptotic behavior point of view.
 To this end, we assume that $k(\cdot)$ is of the form,
\begin{equation}
 \label{kernel-function}
 k(z)=k_{\delta}(z):= \frac1{\delta^N}k_0\left(\frac{z}{\delta}\right)
 \end{equation}
 for some $k_0(\cdot)$ satisfying that  $k_0(\cdot)$ is a smooth, nonnegative, and symmetric
 (in the sense that $k_0(z)=k_0(z')$ whenever $|z|=|z'|$) function supported on the unit ball $B(0,1)$ and
  $\int_{\R^N}k_0(z)dz=1$, where $\delta (>0)$ is called the dispersal distance. We also assume that
\begin{equation}
\label{nu-delta-eq}
  \nu=\nu_{\delta}:=\frac {C}{\delta^2},
  \end{equation}
   where $C=\Big(\frac 12 \int_{\RR^N} k_0(z)z_N^2dz\Big)^{-1}$.  Throughout the rest of this paper,
   we will distinguish the three boundary conditions by
$i=1, 2, 3$. Let
\begin{equation*}
\label{x-space} X_1=X_2=\{u(\cdot)\in C(\bar D, \R)\}
\end{equation*}
with $\|u\|_{X_i}=\max_{x\in\bar D}|u(x)| (i=1, 2)$,
\[
X_3=\{u\in C(\R^N, \R)| u(x+p_j{\bf e_j})=u(x)\},
\]
with $\|u\|_{X_3}=\max_{x\in \R^N}|u(x)|$. Let
\begin{equation*}
\label{x-positive-cone} X_i^+=\{u\in X_i\,|\, u(x)\geq 0\}
\end{equation*}
($i=1, 2, 3$).  For $u^1(x), u^2(x)\in X_i$, we define
\[
u^1\leq u^2 (u^1\geq  u^2) \text{ if } u^2-u^1\in  X_i^+ (u^1-u^2\in
X_i^+)
\]
(i=1, 2, 3). Note that $X_1=X_2$ and the introduction of
$X_2$ is for convenience.

First, we investigate the approximations of solutions to the initial-boundary value problem associated to \eqref{random-eq}, that is,
\begin{equation}
\label{main-random-general}
\begin{cases}
\p_t u(t, x)=\Delta u+F(t,x,u),\quad & x\in D,\cr B_{r,b}(t, x) u=0, \quad &
x\in\p D \quad (x\in \R^N \text{ if } D=\R^N), \cr u(s,x)=u_0(x),\quad &
x\in \bar D
\end{cases}
\end{equation}
by solutions to the initial-boundary value problem associated to \eqref{nonlocal-eq} with $k(\cdot)=k_\delta(\cdot)$ and $\nu=\nu_\delta$, that is,
\begin{equation}
\label{main-nonlocal-general}
\begin{cases}
\p_t u(t, x)=\nu_{\delta}\int_{D\cup D_b}
k_\delta(y-x)[u(t,y)-u(t,x)]dy+F(t,x,u),\,\ & x\in \bar D,\cr
B_{n,b}u(t, x)=0,\quad & x\in D_b \,\,\, {\rm if}\,\, D_b\not =\emptyset, \cr
u(s,x)=u_0(x),\quad & x\in \bar D,
\end{cases}
\end{equation}
where $B_{r,b}=B_{r,D}$ (resp. $B_{n,b}=B_{n,D}$ and $D_b=D_D$), or $B_{r,b}=B_{r,N}$ (resp.  $D_b=D_N(=\emptyset$)), or
$B_{r,b}=B_{r,P}$ (resp. $B_{n,b}=B_{n,P}$ and $D_b=D_P$). In the rest of this paper, we assume

\medskip

\noindent {\bf (H0)} {\it $D\subset \R^N$ is either a bounded $C^{2+\alpha}$ domain for some
$0<\alpha<1$ or $D=\RR^N$; $k_\delta(\cdot)$ is as in \eqref{kernel-function} and
$\nu_\delta$ is as in \eqref{nu-delta-eq}; $F(t,x,u)$ is $C^1$ in $t\in\RR$ and
$C^3$ in
$(x,u)\in \RR^N\times\RR$, and when $D=\RR^N$, $F$ is periodic in $x_j$ with period
$p_j$, that is, $F(t,x+p_j{\bf e_j},u)=F(t,x,u)$ for
$j=1,2,\cdots,N$.}

\medskip

 Note that, by general semigroup theory (see \cite{Henry, Paz}),  for any $s\in\RR$ and any $u_0\in X_i\cap C^1(\bar D)$  with
 $B_{r, b}u_0=0$ on $\p D$,
\eqref{main-random-general} with $b=D$ if $i=1$, $b=N$ if $i=2$, and $b=P$ if $i=3$ has a unique (local) solution, denoted by $u_i(t,x;s,u_0)$. Similarly,  for any  $s\in\RR$ and any $u_0\in X_i$,
\eqref{main-nonlocal-general} with $b=D$ if $i=1$, $b=N$ if $i=2$, and $b=P$ if $i=3$
 has a
unique (local) solution,  denoted  by $u_i^\delta(t,x;s,u_0)$.

Among others, we prove

\medskip

\noindent {\bf Theorem A.} {\it  Assume that  for  given $1\le i\le 3$,  $\delta_0>0$, $s\in\RR$, $T>0$,  and $u_0\in X_i\cap C^{3}(\bar  D)$ with $B_{r, b}u_0=0$ if
$D$ is bounded ($b=D$ if $i=1$ and $b=N$ if $i=2$),  $u_i(t, x; s, u_0)$ and $u_i^\delta(t, x; s, u_0)$ exist on $[s, s+T]$ for all $0<\delta\le \delta_0$.
Assume also that $\sup_{s\le t\le s+T,x\in \bar D, 0<\delta\leq\delta_0} |u_i(t,x;s,u_0)|<\infty$. Then,
$$
\lim_{\delta\to 0}\sup_{t\in[s, s+T]}\|u_i^{\delta}(t, \cdot;s, u_0)-u_i(t, \cdot;
s,u_0)\|_{X_i}=0.
$$ }

\medskip

It should be pointed out that Theorem A is the basis for the study of approximations of various dynamics of
random dispersal evolution equations by those of nonlocal dispersal evolution equations. It should also be pointed out that when $F(t, x, u)\equiv 0$ in \eqref{main-random-general} and \eqref{main-nonlocal-general}, similar results to Theorem A have been proved in \cite{CoElRo} and \cite{CoElRoWo} for the  Dirichlet and Neumann boundary condition cases, respectively.

Secondly, we investigate the principal eigenvalues of  time periodic  random dispersal eigenvalue problems of the form
\begin{equation}
\label{main-random-eigenvalue}
\begin{cases}
-\p_t u+\Delta u+a(t,x)u=\lambda u,\quad& x\in D,\cr
B_{r,b}u=0,\quad &x\in\p D \,\ (x\in \R^N \text{ if } D=\R^N),\cr
u(t+T,x)=u(t,x),\quad &x\in D,
\end{cases}
\end{equation}
 and  their  nonlocal counterparts  of the form
\begin{equation}
 \label{main-nonlocal-eigenvalue}
\begin{cases}
-\p_t u+\nu_{\delta}\int_{D\cup D_b} k_\delta(y-x)\left[u(t,y)-u(t,x)\right]dy+a(t,x)u=\lambda u,\quad& x\in \bar D,\cr
B_{n,b}u=0,\quad &x\in D_b   \,\, {\rm if}\,\, D_b\not=\emptyset, \cr
u(t+T, x)=u(t, x),\quad &x\in \bar D,
\end{cases}
\end{equation}
where  $a(t+T,x)=a(t,x)$, and when $D=\RR^N$, $a(t+T ,x+p_j{\bf e_j})=a(t,x)$
 for $j=1,2,\cdots,N$, and $B_{r,b}=B_{r,D}$ (resp. $B_{n,b}=B_{n,D}$ and $D_b=D_D$), or $B_{r,b}=B_{r,N}$ (resp. $D_b=D_N(=\emptyset)$) or
$B_{r,b}=B_{r,P}$ (resp. $B_{n,b}=B_{n,P}$ and $D_b=D_P$). We assume that $a(t,x)$ is a $C^1$ function in $(t,x)\in\RR\times\RR^N$.

The eigenvalue problems of \eqref{main-random-eigenvalue}, in particular, their associated principal eigenvalue problems, are
extensively studied and quite well understood (see \cite{Dan, Daners, DanersA, PHess, HeShZh, HuShVi, Nad2,  ShVi}, etc.).
For example, with any one of the three boundary conditions, it is known that the largest real part, denoted by $\lambda^r(a)$, of the spectrum set of \eqref{main-random-eigenvalue} is an isolated algebraically simple eigenvalue  with a positive eigenfunction, and for any other $\lambda$ in the spectrum set of \eqref{main-random-eigenvalue}, $\text{Re}\lambda\le \lambda^r(a)$ ($\lambda^r(a)$ is called the
 {\it principal eigenvalue} of $\eqref{main-random-eigenvalue}$ in literature).

The eigenvalue problems \eqref{main-nonlocal-eigenvalue} have also been studied recently by many people (see \cite{BaZh-T, Cov, HuShVi, RaSh, ShVi, ShZh0, ShZh1, ShXi}, etc.). Let $\lambda^{\delta}(a)$ be the largest real part of the spectrum set of \eqref{main-nonlocal-eigenvalue} with any one of the three boundary conditions. $\lambda^{\delta}(a)$ is called the
 {\it principal spectrum point} of \eqref{main-nonlocal-eigenvalue}.  $\lambda^{\delta}(a)$ is also called the
 {\it principal eigenvalue} of \eqref{main-nonlocal-eigenvalue}, if it is an isolated algebraically simple eigenvalue  with a positive eigenfunction (see Definition \ref{n-pev-def} for detail).  Note that $\lambda^\delta(a)$ may not be an eigenvalue
 of \eqref{main-nonlocal-eigenvalue} (see \cite{Cov}, \cite{ShZh0} for examples). Hence the principal eigenvalue of \eqref{main-nonlocal-eigenvalue} may not exist.
In \cite{ShXi}, the authors of the current paper studied the dependence of principal spectrum points or principal eigenvalues (if exist) of nonlocal dispersal operators on underlying parameters ($\delta, a(\cdot)$, and $\nu$) in a spatially heterogeneous but temporally homogeneous case. However, the understanding is still little to many interesting questions regarding the principal spectrum points or principal eigenvalues (if exist) of \eqref{main-nonlocal-eigenvalue}.  In this paper, we show that the principal eigenvalue of \eqref{main-random-eigenvalue} can be approximated by the principal spectrum point of \eqref{main-nonlocal-eigenvalue}. In fact, we show

\medskip

 \noindent{\bf Theorem B.}
 {\it
 $\lim_{\delta \to 0}\lambda^{\delta}(a)=\lambda^r(a)$.
 }
\medskip

We remark that Theorem B is another basis for the study of
approximations of various dynamics of random dispersal evolution
equations by those of nonlocal dispersal evolution equations. We also
remark that some necessary and sufficient conditions are provided in
\cite{RaSh} and \cite{RaShZh}   for $\lambda_\delta(a)$ to be the principal eigenvalue of
\eqref{main-nonlocal-eigenvalue}. Among other, it is proved in
\cite[Theorem A]{RaSh} and \cite[Theorem 3.1]{RaShZh}  that $\lambda^{\delta}(a)$ is the
principal eigenvalue of \eqref{main-nonlocal-eigenvalue} if and only
if
\[
\lambda^{\delta}(a)>\max_{x\in \bar D}\left\{-\frac C{\delta^2} +\frac1T\int_0^Ta(t, x)dt\right\}.
\]
This together with Theorem B implies the following remark.
 \begin{remark}
\label{necessary-sufficient-lambda}
$\lambda^{\delta}(a)$ is the principal eigenvalue of  \eqref{main-nonlocal-eigenvalue}, provided $\delta\ll 1$.
\end{remark}

 Thirdly, we  explore the asymptotic dynamics of the following time periodic dispersal evolution equations,
 \begin{equation}
 \label{main-random-kpp}
 \begin{cases}
 \p_t u=\Delta u+uf(t,x,u),\quad & x\in D,\cr
 B_{r,b}u=0,\quad & x\in\p D \,\ (x\in \R^N \text{ if } D=\R^N),
 \end{cases}
 \end{equation}
 and
 \begin{equation}
 \label{main-nonlocal-kpp}
\begin{cases}
 \p_t u=\nu_{\delta}\int_{D\cup D_b} k_\delta(y-x)[u(t,y)-u(t,x)]dy+uf(t,x,u),\quad & x\in \bar D,\cr
B_{n,b}u=0,& x\in D_b\,\,\, {\rm if}\,\, D_b\not=\emptyset,
\end{cases}
 \end{equation}
 where $D$ is as in {\bf (H0)}. In the rest of this paper, we
 assume that

 \medskip

 \noindent {\bf (H1)} {\it  $f$ is $C^1$ in $t\in\RR$ and  $C^3$ in $(x,u)\in \RR^N\times\RR$; $f(t, x, u)<0$ for $u\gg 1$ and $\partial _u f(t, x, u)<0$ for $u\geq0$};
 $f(t+T,x,u)=f(t,x,u)$; and when $D=\RR^N$, $f(t+T, x,u)=f(t, x+p_j{\bf e_j}, u)=f(t,x,u)$ for $j=1, 2, \cdots, N$.

\medskip

\noindent {\bf (H2)} {\it For \eqref{main-random-kpp}, $\lambda^r(f(\cdot, \cdot, 0))>0$, where $\lambda^r(f(\cdot, \cdot, 0))$ is the principle eigenvalue of
\eqref{main-random-eigenvalue} with $a(t, x)=f(t, x, 0)$.
}

\medskip

\noindent {\bf (H2)${_\delta}$} {\it   For \eqref{main-nonlocal-kpp}, $\lambda^{\delta}(f(\cdot, \cdot, 0))>0$, where $\lambda^{\delta}(f(\cdot, \cdot, 0))$ is
the principle spectrum  point of \eqref{main-nonlocal-eigenvalue} with $a(t, x)=f(t, x, 0)$.
}

\medskip

Equations \eqref{main-random-kpp} and \eqref{main-nonlocal-kpp} are widely used to model population dynamics of species exhibiting random interactions and nonlocal interactions,
 respectively (see \cite{BaZh, CoDaMa1, Nad1}, etc.  for \eqref{main-random-kpp} and \cite{RaSh} for \eqref{main-nonlocal-kpp}). Thanks to the pioneering works of
  Fisher \cite{Fisher} and Kolmogorov et al. \cite{Kolmo} on the following special case of \eqref{main-random-kpp},
\[
\p_t u=u_{xx}+u(1-u), \quad x\in \R,
\]
\eqref{main-random-kpp} and \eqref{main-nonlocal-kpp} are referred to as Fisher type or KPP type equations.

The dynamics of  \eqref{main-random-kpp} and
\eqref{main-nonlocal-kpp} have been studied in many papers (see \cite{HessWein, Nad1, Zhaox} and references therein for \eqref{main-random-kpp}, and
\cite{RaSh} and references therein for
\eqref{main-nonlocal-kpp}).  With conditions (H1) and (H2), it is
proved that  \eqref{main-random-kpp}  has exactly two  nonnegative
time periodic solutions, one is $u\equiv 0$ which is unstable and
the other one,
 denoted by $u^*(t,x)$, is   asymptotically stable and strictly positive (see \cite[Theorem 3.1] { Zhaox}, see also \cite[Theorems 1.1, 1.3]{Nad1}).
  Similar results for \eqref{main-nonlocal-kpp} under the assumptions (H1) and (H2)$_{\delta}$ are proved in \cite[Theorem E]{RaSh}.
 We denote the strictly positive time periodic solution of \eqref{main-nonlocal-kpp} by $u_\delta^*(t,x)$.

Note that, by Theorem B and Remark 1.1,  (H2) implies  (H2)${_\delta}$  when $0<\delta\ll1$. Hence, we only assume (H2) in the following theorem.  In this paper, we show that

%
%

\medskip

\noindent {\bf Theorem C.}
{\it
If (H1) and (H2) hold, then for any $\epsilon>0$, there exists $\delta_0>0$, such that for all $0<\delta<\delta_0$, we have
$$\sup_{t\in [0, T]} \|u_{\delta}^*(t, \cdot)-u^*(t, \cdot)\|_{C(\bar D, \R)}\leq \epsilon.
$$
}
\medskip

 Theorems A-C in the above show that many important
dynamics of random dispersal equations can be approximated by the
corresponding dynamics of nonlocal dispersal equations, which is of
both great theoretical and practical importance.

The rest of the paper is organized as follows. In section 2, we
explore the approximation of solutions of random dispersal evolution
equations by the solutions of nonlocal dispersal evolution equations
and prove Theorem A. In section 3, we investigate the
approximation of principal eigenvalues of time periodic random
dispersal operators by the principal spectrum points of time periodic nonlocal
dispersal operators and prove Theorem B. We study  in section 4 the
approximation of the asymptotic dynamics of time periodic KPP
equations with random dispersal by the asymptotic dynamics of time
periodic KPP equations with nonlocal dispersal  and prove Theorem C.

\section{Approximation of Initial-boundary Value Problems of Random Dispersal Equations by Nonlocal Dispersal Equations}

In this section, we explore the approximation of solutions to
\eqref{main-random-general} by the solutions to
\eqref{main-nonlocal-general}. We first  present some comparison principle for \eqref{main-random-general} and
\eqref{main-nonlocal-general}. Then we prove Theorem A. Though the ideas of the
proofs of Theorem A for different types of boundary conditions are the same,
different techniques are needed for different boundary conditions. We  hence give proofs of Theorem A for different
boundary conditions in different subsections.

\subsection{Comparison principle for random and nonlocal
dispersal evolution equations}

In this subsection, we present a comparison
principle for  random and nonlocal evolution equations, which will be applied  in the proof of Theorem A in this section
as well as in the proofs of Theorem B and C in sections 3 and 4.

\begin{definition}[Super- and sub- solutions]
A continuous function $u(t, x)$ on $[s, s+T)\times \RR^N$ is called a
super-solution (sub-solution) of \eqref{main-nonlocal-general} on $(s,s+T)$ if
for any $x\in \bar D$, $u(t, x)$ is differentiable on $(s, s+T)$ and
satisfies that
\begin{equation*}
\begin{cases}
\p_t u(t, x)\geq(\le)\nu_{\delta}\int_{D\cup D_b} k_{\delta}(y-x)[u(t,y)-u(t,x)]dy
+F(t, x, u), \quad &x\in\bar D,\\
B_{n,b}u(t, x)\geq(\le) 0, &x\in D_b\,\, {\rm if}\,\, D_b\not =\emptyset,\\
u(s,x)\geq(\le) u_0(x), & x\in \bar D,
\end{cases}
\end{equation*}
when $b=D$ or $N$, or that
\begin{equation*}
\begin{cases}
\p_t u(t, x)\geq(\le)\nu_{\delta}\int_{\RR^N} k_{\delta}(y-x)[u(t,y)-u(t,x)]dy
+F(t, x, u), \quad &x\in \R^N,\\
B_{n,b}u(t, x)= 0, &x\in \R^N,\\
u(s,x)\geq(\le) u_0(x), & x\in\R^N,
\end{cases}
\end{equation*}
when $b=P$.

Super-solutions and sub-solutions of \eqref{main-random-general} on $(s,s+T)$ are defined in an analogous way.
\end{definition}

\begin{proposition}[Comparison principle]
\label{comparison-prop}
\begin{itemize}
\item[(1)] Suppose that $u^-(t,x)$ and $u^+(t,x)$ are sub-solution and super-solution of \eqref{main-random-general} on $(s,s+T)$, respectively, then
$$
u^-(t,x)\le u^+(t,x)\quad \forall \,\, t\in [s,s+T),\,\, x\in\bar D.
$$

\item[(2)] Suppose that $u^-(t,x)$ and $u^+(t,x)$ are sub-solution and super-solution of \eqref{main-nonlocal-general} on $(s,s+T)$,  respectively,  then
$$
u^-(t,x)\le u^+(t,x)\quad \forall \,\, t\in [s,s+T),\,\, x\in\bar D.
$$
\end{itemize}
\end{proposition}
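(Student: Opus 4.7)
The plan is to reduce both parts to a single maximum-principle argument applied to the difference $w := u^+ - u^-$. Since $u^\pm$ are continuous on $[s,s+T]\times\bar D$, the Lipschitz constant of $F$ in $u$ over the range swept by $u^\pm$ is bounded; by the mean value theorem I write $F(t,x,u^+)-F(t,x,u^-) = c(t,x)\,w(t,x)$ for a bounded function $c$. Subtracting the sub-solution inequality from the super-solution inequality and introducing $v := e^{-Kt}w$ with $K > \sup|c|+1$, I obtain
\[
\p_t v - \Delta v - (c-K)v \ge 0 \qquad (\text{random case})
\]
or
\[
\p_t v - \nu_\delta \int_{D\cup D_b} k_\delta(y-x)\bigl[v(t,y)-v(t,x)\bigr]\,dy - (c-K)v \ge 0 \qquad (\text{nonlocal case}),
\]
with $c-K<0$, together with $v(s,\cdot)\ge 0$ and the appropriate boundary/periodicity condition on $v$.

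For part (1), I argue by contradiction: if $\inf v < 0$ on $[s,s+T)\times\bar D$, then since $v(s,\cdot)\ge 0$ and (for Dirichlet) $v=0$ on $\p D$, the infimum is attained at some interior $(t_0,x_0)$. There $\p_t v\le 0$, $\Delta v\ge 0$ and $(c-K)v > 0$, contradicting the differential inequality. For Neumann data, the infimum could a priori lie on the lateral boundary, but Hopf's lemma then forces $\p v/\p\mathbf{n}<0$, incompatible with $\p v/\p\mathbf{n}\ge 0$ inherited from the sub/super-solution boundary inequalities. The periodic case is handled by working on the fundamental cell of periodicity, where no lateral boundary exists.

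For part (2), the same substitution produces the integro-differential inequality above. Suppose $\inf v<0$. By periodicity (periodic case), by the Dirichlet condition $v\ge 0$ on $D_D$ (Dirichlet case), or by the fact that the integration is over $\bar D$ alone (Neumann case, since $D_N=\emptyset$), the negative infimum is realized at some $(t_0,x_0)\in(s,s+T]\times\bar D$ with $v(t_0,y)\ge v(t_0,x_0)$ for every $y$ in the integration domain. Then the nonlocal operator at $(t_0,x_0)$ is nonnegative, while $(c-K)v(t_0,x_0)>0$ and $\p_t v(t_0,x_0)\le 0$, again a contradiction.

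The main technical obstacle is ensuring that the negative infimum is actually attained rather than merely approached, since the nonlocal problem affords no interior-regularity upgrade and, in the periodic/whole-space settings, $\bar D$ need not be compact in a uniform sense. The standard remedy I would invoke if needed is to replace $v$ by $v_\epsilon := v + \epsilon e^{\mu t}$ with $\mu$ chosen so that $v_\epsilon$ strictly satisfies the inequality and the strict initial condition $v_\epsilon(s,\cdot)>0$; the preceding argument then yields $v_\epsilon\ge 0$ for every $\epsilon>0$, and sending $\epsilon\to 0$ gives $v\ge 0$, i.e.\ $u^-\le u^+$.
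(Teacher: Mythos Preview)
Your argument is correct and follows the standard maximum-principle route: pass to $v=e^{-Kt}(u^+-u^-)$ so that the zeroth-order coefficient becomes strictly negative, locate a putative negative minimum, and read off a contradiction from the signs of $\p_t v$, the dispersal term, and $(c-K)v$. The paper, however, does not argue this at all: it simply records that part~(1) is the classical parabolic comparison principle and that part~(2) is \cite[Proposition~3.1]{RaSh}. What you have written is, in substance, precisely the proof one finds when unpacking those references; the paper treats the proposition as known background rather than something to be reproved.

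Two small points worth tightening if you keep the direct proof. First, in the Neumann random case your appeal to Hopf's lemma needs the parabolic version, together with the observation that if the boundary minimum is not strict then an interior minimizer already exists and the interior argument applies. Second, the $\epsilon$-perturbation device you describe at the end is only needed to guarantee attainment of the infimum near $t=s+T$; since $\bar D$ is compact in the Dirichlet and Neumann cases and the fundamental cell is compact in the periodic case, the cleaner route is to work on $[s,s+T']\times\bar D$ for arbitrary $T'<T$, where the minimum is attained outright, and then let $T'\uparrow T$.
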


\begin{proof} (1) It follows from comparison principle for parabolic equations.

(2) It follows from \cite[Proposition 3.1]{RaSh}.
\end{proof}

\subsection{Proof of Theorem A in the Dirichlet boundary condition
case}

In this subsection, we prove Theorem A in the Dirichlet boundary
case. Throughout this subsection, we assume (H0), and
$B_{r,b}u=B_{r,D}u$ in \eqref{main-random-general},  and
$D_b=D_D(=\RR^N\backslash \bar D)$ and $B_{n,b} u=B_{n,D}u$ in
\eqref{main-nonlocal-general}. Note that $D\cup D_b=\RR^N$ in this case. Without loss of generality, we assume
$s=0$.

\begin{proof} [Proof of Theorem A in the Dirichlet boundary condition case]
Let $u_0\in C^3(\bar D)$ with $u_0(x)=0$ for $x\in\p D$.
 Let $u_1^{\delta}(t,x)$ be the solution of \eqref{main-nonlocal-general} with $s=0$ and
 $u_1(t,x)$ be the solution of \eqref{main-random-general} with $s=0$.
 Suppose that $u_1(t,x)$ and $u_1^\delta(t,x)$ exist on $[0,T]$. By regularity of solutions for parabolic equations,
  $u_1\in C^{1+\frac{\alpha}{2},2+\alpha}((0, T]\times \bar D)\cap C^{0,2+\alpha}( [0,T]\times \bar D)$.
Let $\tilde u_1$ be an extension of $u_1$ to $[0,T]\times \RR^N$ satisfying that $\tilde u_1\in C^{0,2+\alpha}([0,T]\times \RR^N)$. Define
\begin{equation*}
 L_{\delta}(z)(t,x)=\nu_\delta\int_{\RR^N} k_{\delta} (y-x) [z(t,
y)-z(t, x)]dy.
\end{equation*}
Let  $G(t,x)=\tilde u_1(t,x)$ for $(t,x)\in [0,T]\times \RR^N\backslash \bar D$.
Then $\tilde u_1$ verifies
\begin{equation*}
\begin{cases}
\p_t\tilde u_1(t, x)= L_{\delta}(\tilde u_1)(t,x) + F_{\delta} (t,x) + F(t,x, \tilde u_1(t,x)), \quad& x\in \bar D, \,\, \,\ \quad \,  t\in (0, T],\\
\tilde u_1(t,x)=G(t,x),\quad \,\ &x\in \RR^N\backslash \bar D, t\in [0, T],\\
\tilde u_1(0, x)=u_0(x), \quad & x\in \bar D,
\end{cases}
\end{equation*}
where
\begin{align*}
F_{\delta}(t,x)&=\Delta \tilde u_1(t,x)-{L}_{\delta}(\tilde u_1)(t,x)\\
&=\Delta \tilde u_1(t,x)-\nu_\delta\int_{\RR^N}k_\delta(y-x)(\tilde u_1(t,y)-\tilde u_1(t,x))dy.
\end{align*}

 Let  $w_1^{\delta}=\tilde
u_1-u_1^{\delta}$. We then have
\begin{equation}
\label{thma-d-eq0}
\begin{cases}
\p_t w_1^{\delta}(t, x)= L_{\delta}(w_1^{\delta})(t,x)+F_{\delta}(t,x)+ a_1^\delta(t,x)w_1^{\delta}(t,x), \quad & x\in \bar D, \,\,\, \quad \,\  t\in (0, T],\\
w_1^\delta(t,x)=G(t,x),\quad &x\in \RR^N\backslash \bar D,\,  t\in [0,T],\\
w_1^{\delta}(0, x) =0, \quad & x\in \bar D,
\end{cases}
\end{equation}
where $a_1^\delta(t,x)=\int_0^1 F_u[t, x,
u_1^\delta(t,x)+\theta(\tilde u_1(t,x)-u_1^{\delta}(t,x))]d {\theta}$.

We claim  that
\begin{equation}
\label{thma-d-eq1}
\begin{cases}
\sup_{t\in [0,
T]}\|F_{\delta}(t,\cdot)\|_{X_1}=O({\delta}^{\alpha}),\cr \sup_{t\in
[0, T], x\in \RR^N\setminus\bar D,{\rm dist}(x,\p D)\le
\delta}|G(t,x)|=O(\delta).
\end{cases}
\end{equation}
In fact,
\begin{align*}
&\Delta \tilde u_1(t,x)-\nu_\delta\int_{\RR^N} k_{\delta}(y-x)(\tilde u_1(t,y)-\tilde u_1(t,x))dy\\
&=\Delta \tilde u_1(t,x)-\nu_\delta\int_{\RR^N }\frac{1}{\delta^N}k_0\left(\frac{y-x}{\delta}\right)(\tilde u_1(t,y)-\tilde u_1(t,x))dy\\
&= \Delta \tilde u_1(t,x)-\nu_\delta\int_{\RR^N}k_0(z)(\tilde u_1(t,x+\delta z)-\tilde u_1(t,x))dz\\
&= \Delta \tilde u_1(t,x)-\nu_\delta\int_{\RR^N}k_0(z)\left[\frac{\delta^2 z_N^2}{2!}\Delta \tilde u_1(t,x)+O(\delta^{2+\alpha})\right]dz\\
&= \Delta \tilde u_1(t,x)-\left[\nu_\delta\delta^2\int_{\RR^N}k_0(z)\frac{ z_N^2}{2}dz\right]\Delta \tilde u_1(t,x)+O(\delta^{\alpha})\\
&= \Delta \tilde u_1(t,x)-\Delta \tilde u_1(t,x)+O(\delta^{\alpha})\\
&=O(\delta^{\alpha})\quad \forall \,\, x\in\bar D,
\end{align*}
and
\begin{align*}
|G(t,x)|&=|\tilde u_1(t,x)|\\
&\le \sup_{t\in [0, T], x\in \RR^N\setminus D, z\in\p D, \text{dist}(x,z)\le \delta}|\tilde u_1(t,x)-u_1(t,z)|\\
&=O(\delta)\quad \forall\,\, x\in \RR^N\backslash \bar D,\,\,{\rm dist}(x,\p D)\le\delta.
\end{align*}
Therefore, \eqref{thma-d-eq1} holds.

Next, let $\bar w$ be given by
\[
\bar w(t,x) =e^{At}(K_1{\delta}^{\alpha}t)+K_2 \delta,
\]
where $A=\underset{t\in [0, T],x\in\bar D,0<\delta\leq\delta_0}{\max}a_1^\delta(t,x)$. By direct calculation,  we have
\begin{equation}
\label{thma-d-eq2}
\begin{cases}
\p_t\bar w(t,x)=L_{\delta}(\bar w)+a_1^\delta(t,x)\bar w +\bar F_\delta(t,x)\quad &  x\in\bar D,\quad \quad \,\,\ \,\, t\in (0,T],\\
\bar w(t,x)=e^{At}(K_1{\delta}^{\alpha}t)+K_2 \delta,\quad & x\in \RR^N\backslash\bar D,\quad t\in [0,T],\\
\bar w(0,x)=K_2\delta,\quad & x\in\bar D,
\end{cases}
\end{equation}
where
$$
\bar F_\delta(t,x)=
e^{At}K_1{\delta}^{\alpha}+[A-a_1^\delta(t,x)]e^{At} K_1{\delta}^{\alpha}t-a_1^\delta(t,x)K_2\delta.
$$
By \eqref{thma-d-eq1}, there are  $0<\tilde \delta_0\le \delta_0$ and $K_1,K_2>0$ such that
\begin{equation}
\label{thma-d-eq3}
\begin{cases}
F_\delta(t,x)\le \bar F_\delta(t,x),\quad & x\in\bar D,\,\,\,\, t\in
[0,T],\cr G(t,x)\le e^{At}(K_1{\delta}^{\alpha}t)+K_2 \delta,\quad&
x\in \RR^N\backslash \bar D, \,\, {\rm dist}(x,\p D)\le \delta,\, t\in [0,T],
\end{cases}
\end{equation}
when $0<\delta<\tilde \delta_0$. By  \eqref{thma-d-eq0},
\eqref{thma-d-eq2}, \eqref{thma-d-eq3}, and Proposition
\ref{comparison-prop},  we obtain
\begin{equation}
\label{thma-d-eq4}
 w^{\delta}(t,x)\le \bar w(t,x)=e^{At}(K_1{\delta}^{\alpha}t)+K_2\delta\quad \forall\, x\in\bar D,\,\, t\in [0,T]
\end{equation}
for $0<\delta<\tilde \delta_0$.

Similarly,  let $\underline w(t,x)=e^{At}(-K_1{\delta}^{\alpha}t)-K_2\delta$. We can prove that
for $0<\delta<\tilde \delta_0$  (by reducing $\tilde\delta_0$ if necessary),
\begin{equation}
\label{thma-d-eq5} w^{\delta}(t,x)\geq \underline
w(t,x)=-e^{At}(K_1{\delta}^{\alpha}t)-K_2\delta\quad \forall\,\,
x\in\bar D,\,\, t\in [0,T].
\end{equation}
By \eqref{thma-d-eq4} and \eqref{thma-d-eq5} we have
\[
|w^{\delta}(t,x)|\leq e^{At} K_1{\delta}^{\alpha}t+K_2\delta\quad
\forall\,\, x\in\bar D,\,\, t\in [0,T],
\]
which implies that there is $C(T)>0$ such that for any $0<\delta<\tilde \delta_0$,
\[
\sup_{t\in [0, T]}\|u_1(\cdot,t)-u_1^{\delta}(\cdot,t)\|_{X_1}\leq C(T){\delta}^{\alpha}.
\]
Theorem A in the Dirichlet boundary condition case then follows.
\end{proof}

\begin{remark}
If the homogeneous Dirichlet boundary conditions $B_{r,D}u=u=0$ on $\p D$ and $B_{n, D}u=u=0$ on $\RR^N\backslash\bar D$ are
changed to nonhomogeneous Dirichlet boundary conditions $B_{r, D} u=u=g(t,x)$ on $\p D$ and $B_{n, D} u=u=g(t,x)$ on $\RR^N\backslash \bar D$,
Theorem A also holds, which can be proved by the similar arguments as above.
\end{remark}

\subsection{Proof of Theorem A in the Neumann boundary condition
case}

 In this subsection, we prove Theorem A  in the Neumann boundary condition
case. Throughout this subsection, we assume (H0), and
$B_{r,b}u=B_{r,N}u$ in \eqref{main-random-general}, and
 $D_b=D_N=\emptyset$  in
\eqref{main-nonlocal-general}. Without loss of generality, we assume
$s=0$.

We first introduce two lemmas. To this end, for given $\delta>0$ and $d_0>0$, let $D_{\delta}=\{z\in D|\mathrm{dist}(z,
\partial D)<d_0\delta\}$.

\begin{lemma}
\label{thma-n-lm1}
 Let $\theta\in C^{1+\frac \alpha 2,2+\alpha}( (0,T]\times \times\RR^N)\cap C^{0,2+\alpha}( [0,T]\times\RR^N)$
 and $\frac{\partial \theta}{\partial {\bf n}}=h$ on $\partial D$, then for $x\in D_{\delta}$ and $\delta$ small,
\begin{align*}
&\frac1{\delta^2}{\int}_{\RR^N\backslash D} k_{\delta}(y-x)(\theta(t, y)-\theta(t,x))dy\\
&=\frac1{\delta}{\int}_{\RR^N\backslash D} k_{\delta}(y-x){\bf n}(\bar x)\cdot\frac{y-x}{\delta}h(\bar x, t)dy\\
&+{\int}_{\RR^N\backslash D} k_{\delta}(y-x)\sum_{|\beta|=2}\frac{D^{\beta}\theta}{2}(\bar x, t)\left[\left(\frac{y-\bar x}{\delta}\right)^{\beta}-\left(\frac{x-\bar x}{\delta}\right)^{\beta}\right]dy+O(\delta^\alpha),
\end{align*}
where $\bar x$ is the orthogonal projection of $x$ on the boundary of $D$ so that $\|\bar x-y\|\leq 2d_0\delta$ and
${\bf n}(\bar x)$ is the exterior unit normal vector of $\p D$ at $\bar x$.
\end{lemma}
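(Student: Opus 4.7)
The plan is to perform a Taylor expansion of $\theta(t,\cdot)$ around the boundary point $\bar x$ up to second order, then isolate the normal derivative using the boundary condition and control the remaining tangential contribution via the geometry of $\partial D$.

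First, since $\theta\in C^{0,2+\alpha}([0,T]\times\RR^N)$ and both $y$ (in $\RR^N\setminus D$ with $|y-x|\le\delta$) and $x\in D_\delta$ lie within distance $2d_0\delta$ of $\bar x$, I would apply Taylor's theorem with integral remainder at $\bar x$ to obtain
\begin{equation*}
\theta(t,y)-\theta(t,x)=\nabla\theta(t,\bar x)\cdot(y-x)+\sum_{|\beta|=2}\frac{D^\beta\theta(t,\bar x)}{\beta!}\bigl[(y-\bar x)^\beta-(x-\bar x)^\beta\bigr]+O(\delta^{2+\alpha}).
\end{equation*}
Dividing by $\delta^2$ and integrating against $k_\delta(y-x)$, the Hölder remainder contributes $O(\delta^\alpha)$ because $\int k_\delta\le 1$, and the second-order block is precisely the one appearing in the lemma after rewriting $(y-\bar x)^\beta/\delta^2=((y-\bar x)/\delta)^\beta$ (same for $x-\bar x$).

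The key algebraic step is to decompose $\nabla\theta(t,\bar x)=h(\bar x,t)\mathbf{n}(\bar x)+\nabla_T\theta(t,\bar x)$, where $\nabla_T$ is the tangential gradient along $\partial D$ at $\bar x$. The normal piece yields exactly $\tfrac{1}{\delta}\int k_\delta(y-x)\mathbf{n}(\bar x)\cdot\tfrac{y-x}{\delta}h(\bar x,t)\,dy$, matching the first term of the RHS. Since $\bar x$ is the orthogonal projection of $x$ onto $\partial D$, the vector $x-\bar x$ is parallel to $\mathbf{n}(\bar x)$, so $\nabla_T\theta(t,\bar x)\cdot(y-x)=\nabla_T\theta(t,\bar x)\cdot(y-\bar x)$, and the task reduces to bounding
\begin{equation*}
\frac{1}{\delta^2}\int_{\RR^N\setminus D}k_\delta(y-x)\,\nabla_T\theta(t,\bar x)\cdot(y-\bar x)\,dy=O(\delta^\alpha).
\end{equation*}

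This tangential estimate is where the hard work lies. I would flatten $\partial D$ near $\bar x$ using a $C^{2+\alpha}$ local graph $y_N=\phi(y')$ with $\phi(0)=0$ and $\nabla\phi(0)=0$, so that $\phi(y')=\tfrac{1}{2}y'^{\top}D^2\phi(0)y'+O(|y'|^{2+\alpha})$. After the change of variables $y=x+\delta z$, the integration region becomes $\{z\in B(0,1):z_N>d/\delta+\tfrac{\delta}{2}z'^{\top}D^2\phi(0)z'+O(\delta^{1+\alpha})\}$, where $d=\mathrm{dist}(x,\partial D)\le d_0\delta$. Because $k_0$ is radial (hence even in each $z_j$) and the $z'$-dependence of the lower limit is even to leading order, the tangential integrals $\int k_0(z)z_j\,dz$ ($j<N$) vanish for the symmetric part of the region, leaving only the non-even $O(\delta^{1+\alpha})$ curvature correction in the boundary. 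This produces an $O(\delta^{1+\alpha})$ contribution which, after the prefactor $1/\delta$ arising from the rescaling, is $O(\delta^\alpha)$.

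The main obstacle is the tangential estimate: tracking the $C^{2+\alpha}$ regularity of $\partial D$ carefully enough to show that the asymmetric curvature correction only costs an extra factor of $\delta^{\alpha}$ beyond the naive $O(\delta)$ thickness of the perturbation layer. The crucial points are that the leading curvature term $z'^{\top}D^2\phi(0)z'$ is even in $z'$ (so it does not contribute by symmetry) and that the Hölder-level correction $O(\delta^{1+\alpha})$ in the boundary of the integration region is exactly what matches the $O(\delta^\alpha)$ target on the right-hand side.
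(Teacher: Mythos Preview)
The paper does not prove this lemma itself; it simply cites \cite[Lemma~3]{CoElRo}. Your outline---Taylor expand $\theta$ at $\bar x$, split $\nabla\theta(t,\bar x)$ into its normal and tangential components, and eliminate the tangential term via the reflection $z'\mapsto -z'$ (under which the radial kernel and the leading quadratic boundary correction are even while the tangential integrand is odd)---is correct and is the standard argument for this estimate.
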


\begin{proof}
See \cite[Lemma 3]{CoElRo}.
\end{proof}

\begin{lemma}
\label{thma-n-lm2}
There exist $K>0$ and $\bar \delta >0$ such that for $\delta<\bar \delta$,
\[
\underset{\RR^N\backslash D}{\int} k_{\delta}(y-x){\bf n}(\bar x)\frac{y-x}{\delta}dy \geq K \underset{\RR^N\backslash D}{\int} k_{\delta}(y-x)dy.
\]
\end{lemma}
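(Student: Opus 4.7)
The plan is to rescale both integrals to subsets of the unit ball, locally flatten $\partial D$ using its $C^{2+\alpha}$ regularity, and then extract a positive lower bound from a continuity-compactness argument on the resulting spherical cap. First, substituting $z=(y-x)/\delta$ converts the inequality into a uniform (in $x \in D_\delta$ and $\delta$ small) lower bound for the ratio
\[
\frac{\int_{\mathcal{A}_\delta(x)} k_0(z)\,\mathbf{n}(\bar x)\cdot z\,dz}{\int_{\mathcal{A}_\delta(x)} k_0(z)\,dz}, \qquad \mathcal{A}_\delta(x) := \{z\in B(0,1) : x+\delta z \notin D\}.
\]

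Next I would exploit the $C^{2+\alpha}$ regularity of $\partial D$: on a uniform tubular neighborhood of $\partial D$, the signed distance function $\phi$ (negative inside $D$) is $C^{2+\alpha}$ with $\nabla\phi(\bar x)=\mathbf{n}(\bar x)$ and $|\nabla\phi|\equiv 1$ on $\partial D$. Writing $d(x):=\text{dist}(x,\partial D)$ and $\tau:=d(x)/\delta\in[0,d_0)$, Taylor expansion gives
\[
\phi(x+\delta z) = -d(x) + \delta\,\mathbf{n}(\bar x)\cdot z + O(\delta^2)
\]
uniformly for $|z|\le 1$ and $x\in D_\delta$, so $\mathcal{A}_\delta(x)$ coincides, up to a set of Lebesgue measure $O(\delta)$, with the spherical cap
\[
H_\tau := \{z\in B(0,1) : \mathbf{n}(\bar x)\cdot z \geq \tau\}.
\]
Using the radial symmetry of $k_0$, I may take $\mathbf{n}(\bar x)=e_N$ and reduce to studying the ``flat'' ratio
\[
\Phi(\tau) := \frac{\int_{H_\tau} k_0(z)\, z_N\, dz}{\int_{H_\tau} k_0(z)\, dz}
\]
for $\tau$ in the compact interval $[0,d_0]$ (if $\tau$ is so large that $H_\tau$ carries no $k_0$-mass, both sides of the original inequality vanish and there is nothing to prove). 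Then $\Phi$ is continuous; $\Phi(0)>0$ because $k_0$ is nonnegative, radial, and has unit mass, hence is strictly positive on a set meeting $\{z_N>0\}$; and $\Phi(\tau)\ge\tau>0$ for $\tau>0$. Therefore $K_0 := \min_{[0,d_0]}\Phi > 0$.

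Finally, the measure-$O(\delta)$ discrepancy between $\mathcal{A}_\delta(x)$ and $H_\tau$, together with the boundedness of $k_0$ and of $|z|$ on $B(0,1)$, perturbs both numerator and denominator of the true ratio by $O(\delta)$; choosing $\bar\delta$ small enough gives the bound $K := K_0/2$. The hard part is ensuring that every estimate is \emph{uniform} in $x\in D_\delta$: the shape of $\mathcal{A}_\delta(x)$ depends on the local geometry of $\partial D$ near $\bar x$, and it is the compactness of $\partial D$ together with its uniform $C^{2+\alpha}$ regularity (i.e.\ the existence of a uniform tubular neighborhood on which $\phi$ has bounded $C^{2+\alpha}$ norm) that makes the flattening error and the symmetric-difference measure both $O(\delta)$ with constants independent of $x$.
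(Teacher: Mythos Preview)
The paper does not prove this lemma directly; it simply cites \cite[Lemma~4]{CoElRo}. Your argument is a faithful reconstruction of the standard proof from that reference: rescale to the unit ball, flatten the boundary via the signed-distance function, reduce to a ratio over spherical caps $H_\tau$, and use continuity plus the radial symmetry of $k_0$ to extract a positive lower bound.

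One point deserves more care in your final perturbation step. You argue that the $O(\delta)$ discrepancy between $\mathcal{A}_\delta(x)$ and $H_\tau$ perturbs numerator and denominator each by $O(\delta)$, and conclude that the ratio is at least $K_0/2$ for $\delta$ small. This is only safe when the denominator $\int_{H_\tau} k_0(z)\,dz$ stays bounded away from zero, which can fail as $\tau$ approaches the edge of the support of $k_0$ in the $e_N$ direction (recall the paper only assumes $\mathrm{supp}\,k_0\subset \bar B(0,1)$, not that $k_0>0$ on all of $B(0,1)$). The clean fix is to split into two regimes. For $\tau\ge \tau_1$ with some fixed $\tau_1>0$, use directly that on $\mathcal{A}_\delta(x)$ your Taylor expansion gives $\mathbf{n}(\bar x)\cdot z \ge \tau - C\delta \ge \tau_1/2$ for $\delta$ small, so the numerator dominates $(\tau_1/2)$ times the denominator without any comparison to $H_\tau$. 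For $\tau<\tau_1$ (with $\tau_1$ chosen so that $\int_{H_{\tau_1}}k_0>0$), the denominator $\int_{H_\tau}k_0$ is bounded below uniformly, and your $O(\delta)$ perturbation argument then goes through as written. With this adjustment the proof is complete.
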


\begin{proof}
See \cite[Lemma 4]{CoElRo}.
\end{proof}

\begin{proof}[Proof of Theorem A in the Neumann boundary  condition case]
Suppose that $u_0\in C^3(\bar D)$. Let $u_2^{\delta}(t,x)$ be the
solution to \eqref{main-nonlocal-general} with $s=0$  and $u_2(t,x)$
be the solution to \eqref{main-random-general} with $s=0$. Assume
that $u_2(t,x)$ and $u_2^\delta(t,x)$ exist on $[0,T]$. Then $u_2\in
C^{ 1+\frac{\alpha}{2},2+\alpha}((0, T]\times \bar D)$. Let $\tilde
u_2$ be an extension of $u_2$ to $[0, T]\times\RR^N$ satisfying that
$\tilde u_2\in C^{1+\frac{\alpha}{2},2+\alpha}(
(0,T]\times \RR^N)\cap C^{0,2+\alpha}([0,T]\times \RR^N)$. Define
\[
L_{\delta}(z)(t,x)=\nu_\delta\int_{D} k_{\delta}(y-x)(z(t,y)-z(t,x))dy,
\]
and
\[
\tilde L_{\delta}(z)(t,x)=\nu_\delta \int_{\RR^N} k_{\delta}(y-x)(z(t,y)-z(t,x))dy.
\]
Set $w_2^{\delta}=u_2^{\delta}-\tilde u_2$. Then
\begin{align*}
\p_t w_2^{\delta}(t, x) &=\p_t u_2^{\delta}(t, x)-\p_t\tilde u_2(t, x)\notag\\
& =[L_{\delta}(u_2^{\delta})(t, x)+F(t,x, u_2^{\delta})]-[\Delta \tilde u_2(t, x)+F(t,x, \tilde u_2)]\notag\\
&=L_{\delta}(w_2^{\delta})(t, x)+ a_2^\delta(t,x) w_2^{\delta}(t,x)+F_\delta(t,x),
\end{align*}
where $ a_2^\delta(t,x)=\int_0^1F_u(t, x, \tilde u_2(t,x)+\theta (u^\delta_2(t,x)-\tilde u_2(t,x)))d\theta$ and
\[
F_{\delta}(t,x)=\tilde L_{\delta}(\tilde u_2)(t,x)-\Delta \tilde u_2(t,x)
-\nu_\delta \int_{\RR^N\backslash D} k_{\delta}(y-x)(\tilde u_2(t,y)-\tilde u_2(t,x))dy.
\]
Hence $w_2^{\delta}$ verifies
\begin{equation}
\label{thma-n-eq1}
\begin{cases}
\p_t w_2^{\delta}(t, x)=L_{\delta}(w_2^{\delta})(t, x)+a_2^\delta(t,x)w_2^{\delta}(t, x)+F_{\delta}(t,x),&\quad x\in\bar D,\\
w_2^{\delta}(0,x)=0, &\quad x\in\bar D.
\end{cases}
\end{equation}

To prove the theorem, let us pick an auxiliary function $v$ as a solution to
\begin{equation*}
\begin{cases}
\p_t v(t, x)=\Delta v(t, x)+a_2^\delta(t,x) v(t,x)+h(t,x), \quad & x\in D, \,\,\,\,t\in (0, T],\\
\frac{\partial v}{\partial {\bf n}} (t, x)=g(t,x), \quad & x\in \partial D, \,t\in [0, T],\\
v(0,x)=v_0(x),\quad & x\in D
\end{cases}
\end{equation*}
for some smooth functions $h(t,x)\geq 1$, $g(t,x)\geq 1$ and $v_0(x)\geq 0$ such that $v(t,x)$ has an extension $\tilde v(t,x)\in C^{ 1+\frac{\alpha}{2},
2+\alpha}( (0, T]\times\RR^N)\cap C^{0,2+\alpha}([0,T]\times\RR^N)$.
Then $v$ is a solution to
\begin{equation}
\label{thma-n-eq1-2}
\begin{cases}
\p_t v(t, x)=L_{\delta}(v)(t, x)+a_2^\delta(t,x)v(t, x)+H(t,x,\delta), \quad & x\in \bar D, t\in (0, T],\\
v(0,x)=v_0(x), \quad &x\in \bar D, t\in [0, T],
\end{cases}
\end{equation}
where
$$
H(t,x,\delta)=\Delta \tilde v(t,x)-\tilde L_{\delta} (v)(t,x)+\nu_\delta\int_{\RR^N\backslash D} k_{\delta}(y-x) (\tilde v(t, y)-\tilde v(t,x))dy+h(t,x).
$$
By Lemma \ref{thma-n-lm1} and the first estimate in \eqref{thma-d-eq1}, we have the following estimate for $H(x, t, \delta)$:
\begin{align}
\label{thma-n-eq2}
H(t,x,\delta)&=\Delta \tilde v(t,x)-\tilde L_{\delta}  (v)(t,x)+\frac{C}{\delta^2}\int_{\RR^N\backslash D} k_{\delta}(y-x)  (\tilde v(t, y)-\tilde v(t,x))dy+h(t,x)\notag\\
&\geq \frac{C}{\delta} \underset {\RR^N\backslash D}{\int}  k_{\delta}(y-x){\bf n}(\bar x)\frac{y-x}{\delta} g(\bar x, t)dy \notag\\
&\qquad +C\underset {\RR^N\backslash D}{\int} k_{\delta}(y-x)  \sum_{|\beta|=2}\frac{D^{\beta}\tilde v}{2} (\bar x, t)\left[\left(\frac{y-\bar x}{\delta}\right)^{\beta}-\left(\frac{x-\bar x}{\delta}\right)^{\beta}\right]dy +1-C_1\delta^{\alpha}\notag\\
&\geq \frac{C}{\delta} g(\bar x, t) \underset{\RR^N\backslash D}{\int}  k_{\delta}(y-x)  {\bf n}(\bar x) \frac{y-x}{\delta}dy-D_1C\underset {\RR^N\backslash D}{\int} k_{\delta}(y-x) dy+\frac 12
\end{align}
for some constants $D_1$ and $C_1$ and $\delta$ sufficiently small such that $C_1\delta^{\alpha}\leq \frac 12$. Then  Lemma
 \ref{thma-n-lm2} implies that there exist  $C'>0$ and $\delta'$ such that
\[
\frac1{\delta} \underset{\RR^N\backslash D}{\int} k_{\delta}(y-x) {\bf n}(\bar x)\frac{y-x}{\delta}dy\geq \frac{ C'}{\delta}\underset{\RR^N\backslash D}{\int} k_{\delta}(y-x) dy,
\]
if $\delta <\delta'$. This implies that
\begin{align}
\label{thma-n-eq3}
H(x, t, \delta) \geq \left[\frac{CC'g(\bar x, t)}{\delta}-D_1\right]\underset{\RR^N\backslash D}{\int} k_{\delta}(y-x) dy+\frac 12,
\end{align}
 if $\delta<\delta'$.

We now estimate  $F_\delta(t, x)$.
By Lemmas \ref{thma-n-lm1}, \ref{thma-n-lm2}, the first estimate in \eqref{thma-d-eq1}, and the fact that $\frac{\partial \tilde u_2}{\partial {\bf n}}=0$ on $\p D$, we have
\begin{align}
\label{thma-n-eq4}
F_\delta(t,x) & =O(\delta^\alpha)+\nu_\delta\underset{\RR^N\backslash D}{\int} k_{\delta}(y-x) (\tilde u_2(t, y)-\tilde u_2(t,x))dy\nonumber\\
& =O(\delta^\alpha) +C\underset{\RR^N\backslash D}{\int} k_{\delta}(y-x) \sum_{|\beta|=2}\frac{D^{\beta}\theta}{2}(\bar x, t)\left[ \left(\frac{y-\bar x}{\delta}\right)^{\beta}-\left(\frac{x-\bar x}{\delta}\right)^{\beta}\right]dy\nonumber\\
&\leq C_2 \delta^\alpha+D_1C\underset{\RR^N\backslash D}{\int} k_{\delta}(y-x) dy\nonumber\\
&=C_2\delta^\alpha +D_2\underset{\RR^N\backslash D}{\int} k_{\delta}(y-x) dy
\end{align}
for some $C_2>0$ and $D_2>0$.
Given $\epsilon >0$, let $v_\epsilon=\epsilon v$. By \eqref{thma-n-eq1-2}, $v_{\epsilon}$ satisfies
\begin{equation}
\label{thma-n-eq1-1}
\begin{cases}
\p_t v_{\epsilon}(t, x)-L_{\delta}(v_{\epsilon})(t, x)-a_2^\delta(t,x) v_\epsilon(t, x)=\epsilon H(t,x, \delta), &\quad x\in\bar D,\\
v_{\epsilon}(0,x)=\epsilon v_0(x),&\quad x\in\bar D.
\end{cases}
\end{equation}
By \eqref{thma-n-eq3} and \eqref{thma-n-eq4}, there exist $C_3>0$ and $0<\tilde \delta_0<\delta_0$  such that for $0<\delta\le \tilde \delta_0$,
\begin{align}
\label{thma-n-eq5}
F_{\delta}(t,x)\leq C\delta^\alpha +D_2\underset{\RR^N\backslash D}{\int}k_{\delta}(y-x) dy\leq \frac{\epsilon}{ 2} +\frac{C_3\epsilon}{\delta}
 \underset{\RR^N\backslash D}{\int} k_{\delta}(y-x) dy=\epsilon H(x, t, \delta)\quad \forall \, x\in\bar D,\,\, t\in [0,T].
\end{align}
Then by \eqref{thma-n-eq1}, \eqref{thma-n-eq1-1}, \eqref{thma-n-eq5},  and Proposition \ref{comparison-prop}, we have
\[
-M\epsilon\leq -v_{\epsilon}\leq w_2^{\delta}\leq v_{\epsilon}\leq M\epsilon \quad \forall \,\,  \delta\leq \tilde \delta_0,
\]
where $\displaystyle M=\max_{t\in [0,T],x\in\bar D}v(t,x)$. This implies
\[
\sup_{t\in[0, T]}\|u_2^{\delta}(t, \cdot)-u_2(t, \cdot)\|_{X_2}\rightarrow 0,
\quad \mathrm{as} \,\ \delta \rightarrow 0.
\]
Theorem A in the Neumann boundary condition is thus proved.
\end{proof}

\subsection{Proof of Theorem A in the periodic boundary condition
case}

 In this subsection,  we prove Theorem A in  the periodic  boundary condition
case. Throughout this subsection, we assume (H0),  $B_{r,b}u=B_{r,P}u$ in
\eqref{main-random-general}, and $B_{n,b}u=B_{n,P}u$ in
\eqref{main-nonlocal-general}. Without loss of generality again, we
assume $s=0$.

\begin{proof}[Proof of Theorem A in the periodic boundary case]
Suppose that $u_0\in X_3\cap C^3(\RR^N)$.
Let $u_3^\delta(t,x)$ be the solution to
\eqref{main-nonlocal-general} with $s=0$ and $u_3(t,x)$  be the
solution to \eqref{main-random-general} with $s=0$. Suppose that
$u_3(t,x)$ and $u_3^{\delta}(t, x)$ exist on $[0,T]$.  Set $w_3^\delta=u_3^\delta-u_3$. Then
$w_3^\delta$ satisfies
\begin{equation}
\begin{cases}
\p_t w_3^\delta(t, x)=\nu_\delta\int_{\RR^N} k_{\delta}(y-x) (w_3^\delta(t, y)-w_3^\delta(t,x))dy+a_3^\delta(t,x)w_3^\delta(t, x)+F_\delta(t,x), & x\in\R^N,\, t\in (0, T],\\
w_3^\delta(t,x)=w_3^\delta (t, x+p_j\bold e_j), &x\in \R^N, \, t\in [0, T],\\
w_3^\delta (0, x)=0, &x\in \R^N,
\end{cases}
\end{equation}
where $a_3^\delta(t, x)=\int_0^1F_u(t, x,   u_3(t,x)+\theta (u^\delta_3(t,x)-u_3(t,x)))d\theta$ and $F_\delta(t,x)=\nu_\delta\int_{\RR^N} k_\delta(y-x) [u_3(t, y)-u_3(t,x)]dy-\Delta u_3$.
Let \[
\bar w(t,x) =e^{At}(K_1{\delta}^{\alpha}t)+K_2 \delta,
\]
where $\displaystyle A=\max_{ t\in [0, T], x\in \R^N, 0<\delta\leq\delta_0}a_3^\delta(t, x)$. Applying the similar approach as in the Dirichlet boundary condition case, we can show that there are $K_1>0$, $K_2>0$, and
$\delta_0>0$ such that for $0<\delta<\delta_0$,
$$
-\bar w(t,x)\le w_3^\delta(t,x)\le \bar w(t,x)\quad \forall\,\, x\in\R^N,\,\, t\in [0,T].
$$
Theorem A in the periodic boundary condition case then follows.
\end{proof}

\section{Approximation of Principal Eigenvalues of Time Periodic Random Dispersal Operators by Nonlocal Dispersal Operators}

In this section, we investigate the approximation of principal
eigenvalues of time periodic random dispersal operators by the
principal spectrum points of time periodic nonlocal dispersal
operators. We first recall some basic properties of principal
eigenvalues of time periodic random dispersal or parabolic operators,
and basic properties of principal spectrum points of time periodic
nonlocal dispersal operators. We then prove Theorem B.

\subsection{Basic properties}

In this subsection, we present basic properties of principal
eigenvalues of time periodic  parabolic operators and basic
properties of principal spectrum points of time periodic nonlocal
dispersal operators.

 Let
\begin{equation*}
\label{x-T-space} \mathcal X_1=\mathcal X_2=\{ u\in C(\R\times \bar
D, \R) | u(t+T, x)=u(t, x)\}
\end{equation*}
with norm $\|u\|_{\mathcal X_i}=\sup_{t\in [0, T]}\|u(t, \cdot)\|_{X_i}
(i=1, 2)$,
\[
\mathcal X_3=\{u\in C(\R\times \R^N, \R)| u(t+T,x)=u(t, x+p_j{\bf
e_j })=u(t, x)\}
\]
with norm $\|u\|_{\mathcal X_3}=\sup_{t\in [0, T]}\|u(t, \cdot)\|_{X_3}$,
and
\begin{equation*}
\label{x-T-positive-cone} \mathcal X_i^+=\{u\in \mathcal X_i| u(t,
x)\geq 0\}
\end{equation*}
$(i=1, 2, 3)$. And for $u^1, u^2\in \mathcal X_i$, we
define
\[
u^1\leq u^2 (u^1\geq u^2) \text{ if } u^2-u^1\in \mathcal X_i^+ \,
(u_1-u_2\in \mathcal X_i^+)
\]
$(i=1, 2, 3)$.
For given $a(\cdot,\cdot)\in \mathcal X_i\cap C^1(\RR\times\RR^N)$ , let $L^\delta_i(a):
\mathcal{D}(L^\delta_i(a))\subset \mathcal X_i\to \mathcal X_i$ be
defined as follows,
\begin{equation}
\label{l-D-op}
 (L^\delta_1(a)u)(t,x)=-\p_tu(t, x) +\nu_{\delta}\left[\int_{D}k_\delta(y-x)u(t,
y)dy-u(t, x)\right]+a(t, x)u(t, x),\quad (t,x)\in\RR\times\bar D,
\end{equation}
\begin{equation}
\label{l-N-op} (L^\delta_2(a)u)(t,x)=-\p_tu(t, x)
+\nu_{\delta}\int_{D}k_\delta(y-x)[u(t, y)-u(t, x)]dy+a(t, x)u(t,
x),\quad (t,x)\in\RR\times\bar D,
\end{equation}
and
\begin{equation}
\label{l-P-op} (L^\delta_3(a)u)(t, x)=-\p_tu(t, x)
+\nu_{\delta}\int_{\R^N}k_\delta(y-x)[u(t, y)-u(t, x)]dy+a(t, x)u(t,
x),\quad (t,x)\in \RR\times\RR^N.
\end{equation}

We first recall the definition of principal spectrum
points/eigenvalues of time periodic nonlocal dispersal operators.

\begin{definition}
\label{n-pev-def} Let
\[
\lambda^{\delta}_i(a)=\sup\{{\rm Re}\lambda|\lambda\in
\sigma(L^\delta_i( a))\}
\]
for $i=1,2,3$.
\begin{itemize}
\item[(1)] $\lambda^\delta_i(a)$  is called the  {\rm principal spectrum point} of
$L^\delta_i( a)$.

\item[(2)]   If $\lambda^{\delta}_i(a)$ is an isolated algebraically simple eigenvalue of
$L^\delta_i( a)$ with a positive eigenfunction, then
$\lambda^{\delta}_i(a)$ is called the {\rm principal eigenvalue} of
$L^\delta_i( a)$ or it is said that  $L^\delta_i( a)$ {\rm has a
principal eigenvalue}.
\end{itemize}
\end{definition}

 For the time periodic random dispersal operators, let $a(\cdot,\cdot)\in
\mathcal X_i\cap C^1(\RR\times\RR^N)$, and $L_i(a):\mathcal D(L_i(a))\subset\mathcal X_i\to
\mathcal X_i$ be defined as follows,
\[
(L_i(a)u)(t, x)=-\p_t u(t, x)+\Delta u(t, x)+a(t, x)u(t, x)
\]
for $i=1,2,3$. Note that for $u\in\mathcal{D}(L_1(a))$,
$B_{r,D}u=u=0$ on $\p D$ and for $u\in\mathcal{D}(L_2(a))$,
$B_{r,N}u=\frac{\p u}{\p {\bf n}}=0$ on $\p D$. Let
\[
\lambda^r_i(a)=\sup\{{\rm Re}\lambda|\lambda\in \sigma(L_i(a))\}.
\]
It is well known that
  $\lambda^r_i(a)$ is an isolated algebraically simple eigenvalue of
$L_i( a)$ with a positive eigenfunction  (see \cite{PHess}) and  $\lambda^r_i(a)$ is called
the {\it principal eigenvalue} of $L_i( a)$.

Next we  derive some properties of the principal spectrum points of
nonlocal dispersal operators by using the spectral radius of the
solution  operators of the associated evolution equations. To this
end, for $i=1, 2, 3$,  define $\Phi_i^{\delta}(t, s;  a): X_i\to
X_i$ by
\[
(\Phi_i^{\delta}(t, s;  a)u_0)(\cdot)=u_i(t, \cdot;s,  u_0,  a), \quad
u_0\in X_i,
\]
where $u_1(t, \cdot;s, u_0, a)$ is the solution to
\begin{equation}
\label{l-non-kpp-diri}
\partial_t u(t, x)=\nu_\delta\left[ \int_D k_{\delta}(y-x)u(t, y)dy-u(t, x)\right]+a(t, x)u(t, x), \quad x\in \bar D
\end{equation}
with  $u_1(s, \cdot; s, u_0,  a)=u_0(\cdot)\in X_1$, $u_2(t, \cdot;s, u_0, a)$
is the solution to
\begin{equation}
\label{l-non-kpp-neum}
\partial_t u(t, x)=\nu_\delta  \int_D k_{\delta}(y-x)[u(t, y)-u(t, x)]dy+a(t, x)u(t, x),\quad x\in \bar D
\end{equation}
with  $u_2(s, \cdot; s, u_0,  a)=u_0(\cdot)\in X_2$, and $u_3(t, \cdot;s, u_0,
a)$ is the solution to
\begin{equation}
\label{l-non-kpp-peri}
\partial_t u(t, x)=\nu_\delta\left[ \int_{\R^N}k_{\delta}(y-x)u(t, y)dy-u(t, x)\right]+a(t, x)u(t, x), \quad x\in \R^N
\end{equation}
with  $u_3(s, \cdot; s, u_0,  a)=u_0(\cdot)\in X_3$. By general semigroup
property,
$\Phi_i^\delta(t,s;a)$ ($i=1,2,3$) is well defined.

Let $A_1$ be  $-\Delta$ with Dirichlet boundary condition  acting on $X_1\cap C_0(D)$. Let
\begin{equation}
\label{x-1-r-space}
X_1^r=\mathcal{D}(A_1^\alpha)
\end{equation}
for some $0<\alpha<1$ such that $C^1(\bar D)\subset X_1^r$ with $\|u\|_{X_1^r}=\|A_1^{\alpha} u\|_{X_1}$. Similarly, let $A_2$ be $-\Delta$ with Neumann boundary condition acting on $X_2$. Let
\begin{equation}
\label{x-2-r-space}
X_2^r=X_2
\end{equation}
with $\|u\|_{X_2^r}=\|u\|_{X_2}$, and
\begin{equation}
\label{x-3-r-space}
X_3^r=X_3
\end{equation}
with $\|u\|_{X_3^r}=\|u\|_{X_3}$. Let
\[
X_i^{r, +}=\{u\in X_i^r|u(x)\geq 0\}
\]
(i=1, 2, 3). Similarly, for $i=1, 2, 3$, define $ \Phi_i(t, s;  a): X_i^r\to X_i^r$ by
\[
(\Phi_i(t, s;  a)u_0)(\cdot)=u_i(t, \cdot;s, u_0,  a), \quad u_0\in X_i^r,
\]
where $u_1(t, \cdot; s, u_0,  a)$ is the solution to
\begin{equation}
\label{D-r-PE}
\begin{cases}
\p_t u(t, x)=\Delta u(t, x)+a(t,x)u(t, x), \quad & x\in D,\\
u(t,x)=0, & x\in \partial D
\end{cases}
\end{equation}
with  $u_1(s, \cdot; s, u_0,  a)=u_0(\cdot)\in X_1^r$, $u_2(t, \cdot;s, u_0,a)$
is the solution to
\begin{equation}
\label{N-r-PE}
\begin{cases}
\p_t u(t, x)=\Delta u(t, x)+a(t,x)u(t, x), \quad & x\in D,\\
\frac{\partial u}{\partial {\bf n}}(t, x)=0, & x\in \partial D
\end{cases}
\end{equation}
with $u_2(s, \cdot;s, u_0,  a)=u_0(\cdot)\in X_2^r$, and $u_3(t, \cdot; s, u_0,
 a)$ is the  solution to
\begin{equation}
\label{P-r-PE}
\begin{cases}
\p_t u(t, x)=\Delta u(t, x)+a(t,x)u(t, x), \quad & x\in \RR^N,\\
 u(t, x+p_j\bold{e_j})=u(t,x),& x\in \RR^N
\end{cases}
\end{equation}
with $u_3(s, \cdot; s, u_3, a)=u_0(\cdot)\in X_3^r$.

Let  $r(\Phi_i^{\delta}(T,0 ;  a))$ be the spectral radius of
$\Phi_i^{\delta}(T, 0;  a)$ and $\lambda_i^\delta(a)$ be the principal spectrum point of $L_i^\delta(a)$. We have the following propositions.
\begin{proposition}
\label{spectral-radius-nonlocal-prop} Let $1\le i\le 3$ be given. Then
$$r(\Phi_i^{\delta}(T,0;a))=e^{\lambda_i^\delta(a) T}.
$$
\end{proposition}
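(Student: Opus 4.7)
The plan is to establish the spectral mapping identity $\sigma(\Phi_i^\delta(T,0;a))\setminus\{0\} = \{e^{\lambda T} : \lambda \in \sigma(L_i^\delta(a))\}$ between the generator-type operator $L_i^\delta(a)$ on $\mathcal{X}_i$ and the monodromy operator $\Phi_i^\delta(T,0;a)$ on $X_i$. Once this is in place, the conclusion follows by taking the supremum of moduli: $r(\Phi_i^\delta(T,0;a)) = \sup_{\mu \in \sigma(\Phi_i^\delta(T,0;a))}|\mu| = e^{T\lambda_i^\delta(a)}$.

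I will first set up the point-spectrum correspondence via a Floquet ansatz. If $u \in \mathcal{X}_i$ satisfies $L_i^\delta(a) u = \lambda u$, then $w(t,\cdot) := e^{\lambda t} u(t,\cdot)$ satisfies the forward evolution $\partial_t w = \mathcal{K}_i^\delta(t) w$ associated with \eqref{l-non-kpp-diri}--\eqref{l-non-kpp-peri} (here $\mathcal{K}_i^\delta(t)$ denotes the spatial operator, i.e.\ nonlocal dispersal plus multiplication by $a(t,\cdot)$), so $w(t,\cdot) = \Phi_i^\delta(t,0;a) w(0,\cdot)$, and the $T$-periodicity of $u$ yields $\Phi_i^\delta(T,0;a) u(0,\cdot) = e^{\lambda T} u(0,\cdot)$. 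Conversely, any eigenpair $(\mu,\phi)$ of $\Phi_i^\delta(T,0;a)$ with $\mu \neq 0$ produces an eigenpair $(\lambda,u)$ of $L_i^\delta(a)$ by choosing $\lambda$ with $e^{\lambda T} = \mu$ and setting $u(t,\cdot) := e^{-\lambda t}\Phi_i^\delta(t,0;a)\phi$; one checks $u(T,\cdot) = u(0,\cdot)$, so $u$ extends $T$-periodically and satisfies $L_i^\delta(a) u = \lambda u$.

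For the full spectrum I will use an explicit resolvent construction. Substituting $w = e^{\lambda t} u$ in $(L_i^\delta(a)-\lambda) u = f$ with $u$ periodic gives the inhomogeneous evolution $\partial_t w = \mathcal{K}_i^\delta(t) w - e^{\lambda t} f$ subject to $w(T,\cdot) = e^{\lambda T} w(0,\cdot)$; solving this by variation of constants reduces to inverting $\Phi_i^\delta(T,0;a) - e^{\lambda T} I$ on $X_i$. In particular $e^{\lambda T} \in \rho(\Phi_i^\delta(T,0;a))$ implies $\lambda \in \rho(L_i^\delta(a))$, so $\sigma(L_i^\delta(a)) \subseteq \{\lambda : e^{\lambda T} \in \sigma(\Phi_i^\delta(T,0;a))\}$. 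The reverse inclusion is obtained by converting a Weyl sequence for $\Phi_i^\delta(T,0;a) - e^{\lambda T} I$ on $X_i$ into one for $L_i^\delta(a) - \lambda$ on $\mathcal{X}_i$ via the same Floquet ansatz, yielding the claimed spectral mapping.

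The main obstacle is this last full-spectrum step rather than the point-spectrum part: because the nonlocal dispersal evolution family $\Phi_i^\delta$ is not compact on $X_i$, one cannot simply invoke Krein--Rutman to reduce to eigenvalues, and the explicit variation-of-constants and Weyl-sequence arguments above are needed to handle continuous spectrum. The boundedness of the nonlocal spatial generator $\mathcal{K}_i^\delta(t)$, which makes each $\Phi_i^\delta(t,s;a)$ an invertible bounded operator on $X_i$, is used crucially in this step.
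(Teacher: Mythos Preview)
The paper does not prove this proposition itself; it simply cites \cite[Proposition~3.3]{ShXi}. Your outline is a correct self-contained argument and is the standard Floquet spectral-mapping proof for $T$-periodic evolution families with bounded generator.

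One comment on the reverse inclusion. The Weyl-sequence step you propose is both unnecessary and, as stated, incomplete: Weyl sequences detect only the approximate point spectrum (so any residual spectrum of $\Phi_i^\delta(T,0;a)$ would need a separate argument), and turning a Weyl sequence $\phi_n$ for $\Phi_i^\delta(T,0;a)-e^{\lambda T}I$ into one for $L_i^\delta(a)-\lambda$ requires periodizing $t\mapsto e^{-\lambda t}\Phi_i^\delta(t,0;a)\phi_n$ so that both values and $t$-derivatives match at $t=0,T$, which you have not spelled out. A cleaner route is to note that your own variation-of-constants formula already yields both directions. Suppose $\lambda\in\rho(L_i^\delta(a))$ and $\psi\in X_i$ is given. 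Choose a continuous $T$-periodic scalar function $c(\cdot)$ with $c(0)=0$ and $\int_0^T c(s)\,ds=1$, set
\[
f(s,\cdot)=c(s)\,e^{-\lambda s}\,\Phi_i^\delta(s,T;a)\psi\ \in\ \mathcal X_i,
\]
and let $u=(L_i^\delta(a)-\lambda)^{-1}f$. Your formula then gives
\[
\big(\Phi_i^\delta(T,0;a)-e^{\lambda T}I\big)\,u(0,\cdot)=\int_0^T\Phi_i^\delta(T,s;a)\,e^{\lambda s}f(s,\cdot)\,ds=\psi,
\]
so $\Phi_i^\delta(T,0;a)-e^{\lambda T}I$ is surjective; injectivity is immediate from the eigenvalue correspondence you already established. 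Hence $e^{\lambda T}\in\rho(\Phi_i^\delta(T,0;a))$. Combined with the forward direction and the invertibility of $\Phi_i^\delta(T,0;a)$ (bounded generator, as you correctly emphasize), this gives the full spectral mapping $\sigma(\Phi_i^\delta(T,0;a))=\exp\big(T\,\sigma(L_i^\delta(a))\big)$ and the proposition follows.
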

\begin{proof}
See \cite[Proposition 3.3]{ShXi}.
\end{proof}

Similarly, let $r(\Phi_i(T, 0;  a))$ be the spectral radius of $\Phi_i(T, 0;
 a)$ and $\lambda_i^r(a)$ be the principal eigenvalue of $L_i(a)$. Note that
 $X_i^r$ is a strongly ordered Banach space with the positive cone $C=\{u\in X_i^r\,|\, u(x)\ge 0\}$ and
 by the regularity, {\it a priori} estimates, and comparison principle  for parabolic equations,
 $\Phi_i(T,0;a):X_i^r\to X_i^r$ is strongly positive and compact. Then by
  the Kre\u \i n-Rutman Theorem
(see \cite{K-R}), $r(\Phi_i(T,0;a))$ is an isolated algebraically simple eigenvalue of $\Phi_i(T,0;a)$ with a positive
eigenfunction $u_i^r(\cdot)$ and for any $\mu\in\sigma(\Phi_i(T,0;a))\setminus\{r(\Phi_i(T,0;a))\}$,
$$
\text{Re}\mu< r(\Phi_i(T,0;a)).
$$
The following propositions then follow.

 \begin{proposition}
 \label{spectral-radius-random-prop}
 Let $1\le i\le 3$ be given. Then
 $$
 r(\Phi_i(T,0;a))=e^{\lambda_i^r(a) T}.
 $$
 Moreover, there is a codimension one subspace $Z_i$ of $X_i^r$ such that
 $$
 X_i^r=Y_i\oplus Z_i,
 $$
 where $Y_i={\rm span}\{u_i^r(\cdot)\}$, and there are $M>0$ and $\gamma>0$ such that
 for any $w_i\in Z_i$, there holds
 $$
 \frac{\|\Phi_i(nT,0;a)w_i\|_{X_i^r}}{\|\Phi_i(nT,0;a)u_i^r\|_{X_i^r}}\le M e^{-\gamma nT}.
 $$
 \end{proposition}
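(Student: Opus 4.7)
The plan is to derive the proposition from the Kre\u \i n-Rutman property already asserted for $\Phi_i(T,0;a)$ in the paragraph preceding the statement, combined with the standard Floquet-type correspondence between the principal eigenvalue of $L_i(a)$ and the spectral radius of its period-$T$ solution operator.

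First I would identify $r(\Phi_i(T,0;a))$ with $e^{\lambda_i^r(a)T}$. Let $\psi_i(t,x)$ denote the positive $T$-periodic principal eigenfunction of $L_i(a)$, so that $-\p_t\psi_i+\Delta\psi_i+a\psi_i=\lambda_i^r(a)\psi_i$ with the boundary condition appropriate to the case at hand. A routine computation shows that $v(t,x):=e^{\lambda_i^r(a)t}\psi_i(t,x)$ is the classical solution of the initial-value problem defining $\Phi_i(t,0;a)$ with initial datum $\psi_i(0,\cdot)$, and the $T$-periodicity of $\psi_i$ then yields $\Phi_i(T,0;a)\psi_i(0,\cdot)=e^{\lambda_i^r(a)T}\psi_i(0,\cdot)$. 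Since $\psi_i(0,\cdot)$ is strictly positive and, by the Kre\u \i n-Rutman theorem recalled in the excerpt, $r(\Phi_i(T,0;a))$ is the unique eigenvalue of $\Phi_i(T,0;a)$ admitting a positive eigenvector, I conclude $r(\Phi_i(T,0;a))=e^{\lambda_i^r(a)T}$ and may take $u_i^r(\cdot)$ to be a positive scalar multiple of $\psi_i(0,\cdot)$.

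For the decomposition, since $r:=r(\Phi_i(T,0;a))$ is an isolated algebraically simple eigenvalue, the Riesz spectral projection $P_i$ associated with the singleton $\{r\}$ is a rank-one projector onto $Y_i=\mathrm{span}\{u_i^r\}$, and $Z_i:=(I-P_i)X_i^r$ is a closed codimension-one subspace that is $\Phi_i(T,0;a)$-invariant and satisfies $X_i^r=Y_i\oplus Z_i$. The spectrum of $\Phi_i(T,0;a)|_{Z_i}$ is exactly $\sigma(\Phi_i(T,0;a))\setminus\{r\}$, so its spectral radius $r':=r(\Phi_i(T,0;a)|_{Z_i})$ satisfies $r'<r$.

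For the exponential decay, fix any $r''\in(r',r)$; Gelfand's formula provides a constant $C>0$ with $\|\Phi_i(T,0;a)^n|_{Z_i}\|_{X_i^r\to X_i^r}\le C(r'')^n$ for all $n\ge 1$. Combined with the eigenvalue identity $\Phi_i(nT,0;a)u_i^r=r^n u_i^r$, this gives, for any $w_i\in Z_i$,
\[
\frac{\|\Phi_i(nT,0;a)w_i\|_{X_i^r}}{\|\Phi_i(nT,0;a)u_i^r\|_{X_i^r}}\le \frac{C\,\|w_i\|_{X_i^r}}{\|u_i^r\|_{X_i^r}}\Bigl(\frac{r''}{r}\Bigr)^n = Me^{-\gamma nT}
\]
with $\gamma:=T^{-1}\ln(r/r'')>0$ and $M:=C\|w_i\|_{X_i^r}/\|u_i^r\|_{X_i^r}$. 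I expect the main technical obstacle to lie in Step~1: one must legitimately interpret the substitution $v=e^{\lambda_i^r(a)t}\psi_i$ as a trajectory in $X_i^r$ so that the identity $\Phi_i(T,0;a)\psi_i(0,\cdot)=e^{\lambda_i^r(a)T}\psi_i(0,\cdot)$ really takes place in the correct space. This is delicate in the Dirichlet case $i=1$ where $X_1^r=\mathcal{D}(A_1^\alpha)$ is a fractional-power space, but follows from the regularity and boundary-trace properties of the parabolic principal eigenfunction recorded in \cite{PHess}.
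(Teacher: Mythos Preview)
Your proposal is correct and matches the paper's approach: the paper does not supply a separate proof but simply records, in the paragraph immediately preceding the proposition, that $\Phi_i(T,0;a)$ is strongly positive and compact on $X_i^r$, invokes the Kre\u \i n--Rutman theorem, and then states that ``the following propositions then follow.'' Your argument is exactly the standard filling-in of these details---the Floquet identification $\Phi_i(T,0;a)\psi_i(0,\cdot)=e^{\lambda_i^r(a)T}\psi_i(0,\cdot)$, the Riesz projection for the decomposition, and Gelfand's formula for the decay---and is what the paper has in mind. One small remark: your constant $M=C\|w_i\|_{X_i^r}/\|u_i^r\|_{X_i^r}$ depends on $w_i$, whereas the proposition is phrased with a single $M$ for all $w_i\in Z_i$; the statement as written cannot literally hold with $M$ independent of $\|w_i\|$, and indeed when the proposition is used later (in the proof of Theorem~B) the constant is taken for a fixed $u'\in Z_1$, so your reading is the intended one.
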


 \begin{proposition}
 \label{perturbation-prop}
 For given $1\le i\le 3$ and $a_1,a_2\in \mathcal X_i\cap C^1(\RR\times\RR^N)$,
 \begin{equation}
 \label{perturbation-eq1}
 |\lambda_i^\delta(a_1)-\lambda_i^\delta(a_2)|\le \max_{t\in [0,T], x\in\bar D}|a_1(t,x)-a_2(t,x)|,
 \end{equation}
 and
 \begin{equation}
 \label{perturbation-eq2}
 |\lambda_i^r(a_1)-\lambda_i^r(a_2)|\le \max_{t\in [0,T], x\in\bar D}|a_1(t,x)-a_2(t,x)|.
 \end{equation}
 \end{proposition}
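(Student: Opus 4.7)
The plan is to reduce both inequalities to a single comparison-principle argument combined with the spectral radius identities in Propositions \ref{spectral-radius-nonlocal-prop} and \ref{spectral-radius-random-prop}. Set $c:=\max|a_1-a_2|$ taken over $[0,T]\times\bar D$ (or $[0,T]\times\RR^N$ when $i=3$). For any $u_0\ge 0$, let $u_1,u_2$ be the solutions of the corresponding evolution problem (either \eqref{l-non-kpp-diri}--\eqref{l-non-kpp-peri} or \eqref{D-r-PE}--\eqref{P-r-PE}) with coefficients $a_1,a_2$ and common initial data $u_0$. The function $v(t,x):=e^{ct}u_1(t,x)$ satisfies the same equation as $u_1$ but with $a_1$ replaced by $a_1+c$, and the pointwise inequality $a_1+c\ge a_2$ together with Proposition \ref{comparison-prop} yields $v\ge u_2$ on $[0,T]$. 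Evaluating at $t=T$ gives
\[
0\le \Phi_i^\delta(T,0;a_2)u_0\le e^{cT}\,\Phi_i^\delta(T,0;a_1)u_0,
\]
and likewise for $\Phi_i(T,0;\cdot)$ on $X_i^{r,+}$.

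Next I would invoke monotonicity of the spectral radius for positive bounded operators on the Banach lattices $X_i$ and $X_i^r$: if $0\le S\le R$ on the positive cone, then $\|S^n u_0\|\le\|R^n u_0\|$ for $u_0\ge 0$, and since the norms on $X_i$ and $X_i^r$ are compatible with the lattice structure, this gives $\|S^n\|\le\|R^n\|$, hence $r(S)\le r(R)$ by Gelfand's formula. Applied to $S=\Phi_i^\delta(T,0;a_2)$, $R=e^{cT}\Phi_i^\delta(T,0;a_1)$, and to the corresponding random-dispersal pair, this produces
\[
r\bigl(\Phi_i^\delta(T,0;a_2)\bigr)\le e^{cT}\,r\bigl(\Phi_i^\delta(T,0;a_1)\bigr),\qquad r\bigl(\Phi_i(T,0;a_2)\bigr)\le e^{cT}\,r\bigl(\Phi_i(T,0;a_1)\bigr).
\]
Substituting the identities $r(\Phi_i^\delta(T,0;a))=e^{\lambda_i^\delta(a)T}$ and $r(\Phi_i(T,0;a))=e^{\lambda_i^r(a)T}$ from Propositions \ref{spectral-radius-nonlocal-prop} and \ref{spectral-radius-random-prop} and taking logarithms yields $\lambda_i^\delta(a_2)-\lambda_i^\delta(a_1)\le c$ and $\lambda_i^r(a_2)-\lambda_i^r(a_1)\le c$. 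Swapping the roles of $a_1$ and $a_2$ then gives \eqref{perturbation-eq1} and \eqref{perturbation-eq2}.

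The exponential rescaling step and the comparison invocation are essentially mechanical once one writes them down, so the main point requiring a touch of care is the spectral radius monotonicity in the nonlocal case: unlike $\Phi_i(T,0;a)$, which is compact and strongly positive so that Kre\u\i n--Rutman applies directly, $\Phi_i^\delta(T,0;a)$ is generally not compact on $X_i$. The Gelfand-formula argument sketched above circumvents this, but it is the one technical ingredient that would have to be made explicit in a full write-up; everything else follows immediately from the comparison principle already established in Proposition \ref{comparison-prop}.
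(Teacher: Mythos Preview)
Your proposal is correct and follows essentially the same route as the paper: the paper also perturbs $a_1$ by $\pm a_0$ (your $c$), uses the identity $\Phi_i^\delta(t,s;a_1\pm a_0)=e^{\pm a_0(t-s)}\Phi_i^\delta(t,s;a_1)$, applies the comparison principle to sandwich $\Phi_i^\delta(T,0;a_2)$, and then passes to spectral radii via Propositions \ref{spectral-radius-nonlocal-prop} and \ref{spectral-radius-random-prop}. Your explicit Gelfand-formula justification of spectral-radius monotonicity is in fact more detailed than the paper, which simply asserts that pointwise ordering of the period maps implies ordering of their spectral radii.
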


\begin{proof}
Let  $a_0= \max_{t\in [0,T], x\in\bar D}|a_1(t,x)-a_2(t,x)|$ and
$$a_1^\pm(t,x)=a_1(t,x)\pm a_0.$$
It is not difficult to see that
$$
\Phi_i^\delta(t,s;a_1^\pm)=e^{\pm a_0 (t-s)}\Phi_i^\delta(t,s;a_1).
$$
It then follows that
\begin{equation}
\label{perturbation-eq3}
r(\Phi_i^\delta(T,0;a_1^\pm))=e^{(\lambda_i^\delta(a_1)\pm a_0)T}.
\end{equation}
Observe that by Proposition \ref{comparison-prop},  for any $u_0\in X_i^{+}$,
$$
\Phi_i^\delta(T,0;a_1^-)u_0\le \Phi_i^\delta(T,0;a_2)u_0\le \Phi_i^\delta(T,0;a_1^+)u_0.
$$
This implies that
$$
r(\Phi_i^\delta(T,0;a_1^-))\le r(\Phi_i^\delta(T,0;a_2))\le r(\Phi_i^\delta(T,0;a_1^+)).
$$
This together with \eqref{perturbation-eq3} implies that
\begin{equation}
\label{perturbation-eq4}
\lambda_i^\delta(a_1)-a_0\le \lambda_i^\delta(a_2)\le\lambda_i^\delta(a_1)+a_0,
\end{equation}
that is, \eqref{perturbation-eq1} holds.

Similarly, we can prove that \eqref{perturbation-eq2} holds.
\end{proof}

\subsection{Proof of Theorem B in the Dirichlet boundary condition case}

In this subsection, we prove Theorem B in the Dirichlet  boundary
condition case. Throughout this subsection, we assume
$B_{r,b}u=B_{r,D}u$ in \eqref{main-random-eigenvalue}, and
$D_b=D_D(=\RR^N\setminus \bar D)$ and $B_{n,b}u=B_{n,D}u$ in
\eqref{main-nonlocal-eigenvalue}.
Note that for any $a\in \mathcal{X}_1\cap C^1(\RR\times\RR^N)$, there are $a_n\in \mathcal{X}_1\cap C^3(\RR\times\RR^N)$ such that
$\sup_{t\in [0,T]}\|a_n(t,\cdot)-a(t,\cdot)\|_{X_1} \to 0$ as $n\to\infty$. By Proposition \ref{perturbation-prop}, without loss
of generality, we may assume that $a\in\mathcal{X}_1\cap C^3(\RR\times\RR^N)$.

\begin{proof} [Proof of Theorem B in the Dirichlet  boundary
condition case]

First of all, for the simplicity in notation, we put
$$
\Phi(T,0)=\Phi_1(T,0;a), \,\ \quad \lambda_1^r=\lambda_1^r(a),
$$
and
$$
\Phi^\delta(T,0)=\Phi_1^\delta(T,0;a), \quad \lambda_1^\delta=\lambda_1^\delta(a).
$$
Let $u^r(\cdot)$ be a positive eigenfunction of $\Phi(T,0)$ corresponding to
$r(\Phi(T,0))$. Without loss of generality, we assume that $\|u^r\|_{X_1^r}=1$.

We first show that for any $\epsilon>0$, there is $\delta_1>0$ such that for $0<\delta<\delta_1$,
\begin{equation}
\label{thmb-d-eq1}
\lambda_1^\delta\geq \lambda_1^r-\epsilon.
\end{equation}
In order to do so, choose $D_0\subset \subset D$ and $u_0\in  X_1^r\cap C^3(\bar D)$ such that $u_0(x)=0$ for $x\in D\backslash D_0$, and $u_0(x)>0$ for $x\in \text{Int}D_0$.
 By Proposition \ref{spectral-radius-random-prop},  there exist $\alpha> 0$, $M>0$, and $u'\in Z_1$, such that
\begin{equation}
\label{thmb-d-eq2}
u_0(x)=\alpha u^r(x)+u'(x),
\end{equation}
and
\begin{equation}
\label{thmb-d-eq3}
\|\Phi(nT,0)u'\|_{X_1^r}\le M e^{-\gamma nT} e^{\lambda_1^r nT}.
\end{equation}
By Theorem A, there is $\delta_0>0$ such that for $0<\delta<\delta_0$, there hold
\begin{equation}
\label{thmb-d-eq4}
\big(\Phi^{\delta}(nT, 0)u^r\big)(x)\geq \big(\Phi(nT, 0)u^r\big)(x)-C^1(nT,\delta),
\end{equation}
and
\begin{equation}
\label{thmb-d-eq5}
\big(\Phi^{\delta}(nT, 0)u'\big)(x)\leq \big(\Phi(nT, 0)u'\big)(x)+C^2(nT,\delta),
\end{equation}
where $C^i(nT,\delta)\to 0$ as $\delta\to 0$ ($i=1,2$).
Hence for $0<\delta<\delta_0$,
\begin{align}
\label{thmb-d-eq6}
\big(\Phi^\delta(nT,0)u_0\big)(x)=&\alpha\big(\Phi^{\delta}( n T, 0) u^r\big)(x)+\big(\Phi^{\delta}( n T, 0)u'\big)(x)\notag\\
\geq & \alpha \big(\Phi(nT, 0)u^r\big)(x)-\alpha C^1(nT,\delta)-C^2(nT, \delta)-\|\Phi(nT, 0)u'\|_{X_1^r}\notag\\
\geq & \alpha  e^{\lambda_1^r nT}u^r(x)-\alpha C^1(nT,\delta)-C^2(nT,\delta)-M e^{-\gamma nT} e^{\lambda_1^r nT}\notag\\
= &  e^{(\lambda_1^r-\epsilon) nT} e^{\epsilon nT}(\alpha u^r(x)-Me^{-\gamma nT})-\alpha C^1(nT,\delta)-C^2(nT,\delta).
\end{align}
Note that  there exists $m>0$ such that
 \begin{equation*}
u^r(x)\geq m>0\quad \text{ for } x\in \bar D_0.
\end{equation*}
Hence for any $0<\epsilon<\gamma$, there is $n_1>0$ such that for $n\ge n_1$,
\begin{equation}
\label{thmb-d-eq7}
e^{\epsilon nT} (\alpha u^r (x)-M e^{-\gamma nT})\ge u_0(x)+1\quad {\rm for}\quad x\in \bar D_0,
\end{equation}
and there is $\delta_1\le \delta_0$ such that for $0<\delta<\delta_1$,
\begin{equation}
\label{thmb-d-eq8}
C^1(n_1T,\delta)+C^2(n_1T,\delta)\le  e^{(\lambda_1^r-\epsilon)n_1T}.
\end{equation}
Note that $u_0(x)=0$ for $x\in D\backslash D_0$ and $\big(\Phi^\delta(n_1T,0)u_0\big)(x)\ge 0$ for all $x\in \bar D$. This together with
 \eqref{thmb-d-eq6}-\eqref{thmb-d-eq8} implies that for $\delta<\delta_1$,
\begin{equation}
\label{thmb-d-eq9}
\big(\Phi^\delta(n_1T,0)u_0\big)(x)\ge e^{(\lambda_1^r-\epsilon)n_1T} u_0(x),\quad x\in\bar D.
\end{equation}
By \eqref{thmb-d-eq9} and Proposition \ref{comparison-prop}, for any  $0<\delta<\delta_1$ and $n\ge 1$,
$$
(\Phi^\delta(nn_1T,0)u_0)(\cdot)\ge e^{(\lambda_1^r-\epsilon)nn_1T} u_0(\cdot).
$$
This together with Proposition \ref{spectral-radius-nonlocal-prop} implies that for $0<\delta<\delta_1$,
$$
e^{\lambda_1^\delta T}=r(\Phi^\delta(T,0))\ge e^{(\lambda_1^r-\epsilon)T}.
$$
Hence \eqref{thmb-d-eq1} holds.

Next, we prove that for any $\epsilon>0$, there is $\delta_2>0$ such that for $0<\delta<\delta_2$,
\begin{equation}
\label{thmb-d-eq10}
\lambda_1^\delta\leq \lambda_1^r+ \epsilon.
\end{equation}
To this end, first,
choose  a sequence of smooth
domains $\{D_m\}$ such that $ D_1\supset  D_2 \supset  D_3 \cdots \supset  D_m \supset \cdots \supset \bar D$,
  and $\cap_{m=1}^{\infty}D_m=\bar D$. Consider the following evolution equation
\begin{equation}
\label{D-r-PE-hat}
\begin{cases}
\p_tu(t, x)= \Delta  u(t, x)+a(t,x) u(t, x), \quad  & x\in D_m,\cr
 u(t,x)=0, \quad  & x \in \partial  D_m.
\end{cases}
\end{equation}
Let
\[
X_{1, m}=\{u\in C(\bar D_m, \R)\},
\]
and
\[
X_{1, m}^r=\mathcal D(A_{1, m}^\alpha),
\]
where $A_{1, m}$ is  $-\Delta$ with Dirichlet boundary condition acting on $X_{1, m}\cap C_0(D_m)$ and $0<\alpha<1$.
We denote the solution of \eqref{D-r-PE-hat} by $ u_m(t, \cdot; s,  u_0)=(\Phi_m(t, s)  u_0)(\cdot)$ with $u(s, \cdot; s,  u_0)= u_0(\cdot)\in X_{1, m}^r$.
By Proposition \ref{spectral-radius-random-prop}, we have
\begin{equation*}
r(\Phi_m(T, 0))=e^{\lambda_{1, m}^rT},
\end{equation*}
where $\lambda^r_{1, m}$ is the principal eigenvalue of  the following eigenvalue problem,
\begin{equation*}
\begin{cases}
-\p_t u+\Delta u+a(t,x)u=\lambda u,\quad & x\in D_m,\cr
u(t+T,x)=u(t,x),\quad & x\in D_m,\cr
u(t,x)=0,\quad & x\in\p D_m.
\end{cases}
\end{equation*}
By the dependence of the principle eigenvalue on the domain perturbation (see \cite{Dan}), for any $\epsilon>0$, there exists $m_1$ such that
\begin{equation}
\label{thmb-d-eq11}
\lambda_{1, m_1}^r\leq \lambda_1^r+\frac{\epsilon}2.
\end{equation}

Secondly,
let $u_{m_1}^r(\cdot)$ be a positive eigenfunction of $\Phi_{m_1}(T,0)$ corresponding to $r(\Phi_{m_1}(T,0))$. By regularity for parabolic equations, $u_{m_1}^r\in C^3(\bar D_{m_1})$.
Let $ (\Phi_{m_1}^\delta(t, 0) u^r_{m_1})(x)$ be the solution to
\begin{equation}
\label{D-non-hat}
\begin{cases}
u_t=\nu_\delta\left[\int_{ D_{m_1}} k_\delta(y-x) u(t, y)dy- u(t,x)\right]+a(t,x)u(t,x), \quad& x\in  \bar D_{m_1}, \\
u(0,x)=u_{m_1}^r(x).
\end{cases}
\end{equation}
Then by Theorem A,
$$\big(\Phi_{m_1}^\delta(nT,0) u_{m_1}^r\big)(x)\leq\big( \Phi_{m_1}(nT,0) u_{m_1}^r\big)(x)+C(nT,\delta)\quad \forall \,\, x\in\bar D_{m_1},
$$
where $C(nT,\delta)\to 0$ as $\delta\to 0$.
By Proposition \ref{comparison-prop},
$$
\big(\Phi^\delta(nT,0)u_{m_1}^r|_{\bar D}\big)(x)\le \big(\Phi_{m_1}^\delta(nT,0)u_{m_1}^r\big)(x)\quad \forall \,\, x\in\bar D.
$$
It then follows that for $x\in\bar D$,
\begin{align}
\label{thmb-d-eq12}
\big(\Phi^\delta(nT,0)u_{m_1}^r|_{\bar D}\big)(x)&\le \big(\Phi_{m_1}(nT,0) u_{m_1}^r\big)(x)+C(nT,\delta)\nonumber\\
&=e^{\lambda_{m_1}^r nT} u_{m_1}^r(x)+C(nT,\delta)\nonumber\\
&\le e^{(\lambda_1^r +\frac{\epsilon}{2})nT}u_{m_1}^r(x)+C(nT,\delta)\nonumber\\
&= e^{(\lambda_1^r+\epsilon)nT} e^{-\frac{\epsilon}{2}nT}u_{m_1}^r(x)+C(nT,\delta).
\end{align}
Note that
$$
\min_{x\in\bar D} u_{m_1}^r(x)>0.
$$
Hence for any $\epsilon>0$, there is $n_2\ge 1$ such that
\begin{equation}
\label{thmb-d-eq13}
e^{-\frac{\epsilon }{2}n_2T}\le \frac{1}{2},
\end{equation}
and there is $\delta_2>0$ such that for $0<\delta<\delta_2$,
\begin{equation}
\label{thmb-d-eq14}
C(n_2T,\delta)\le \frac{1}{2} e^{(\lambda_1^r+\epsilon)n_2T} u_{m_1}^r(x) \quad \forall \, x\in \bar D.
\end{equation}
By \eqref{thmb-d-eq12}-\eqref{thmb-d-eq14},
$$
\big(\Phi^\delta(n_2T,0)u_{m_1}^r|_{\bar D}\big)(x)\le e^{(\lambda_1^r+\epsilon)n_2T}u_{m_1}^r(x)\quad \forall \,\, x\in\bar D.
$$
This together with Proposition \ref{comparison-prop} implies that for $0<\delta<\delta_2$,
\begin{equation}
\label{thmb-d-eq15}
\big(\Phi^\delta(nn_2T,0)u_{m_1}^r|_{\bar D}\big)(x)\le e^{(\lambda_1^r+\epsilon)nn_2T} u_{m_1}^r(x)\quad \forall \,\, x\in\bar D.
\end{equation}
This together with Proposition \ref{spectral-radius-nonlocal-prop} implies that
$$
\lambda_1^\delta \le\lambda_1^r+\epsilon
$$
for $0<\delta<\delta_2$, that is,
\eqref{thmb-d-eq10} holds.

Theorem B in the Dirichlet boundary condition case then follows from \eqref{thmb-d-eq1} and \eqref{thmb-d-eq10}.
\end{proof}

\subsection{Proofs of Theorem B in the Neumann and periodic  boundary condition
cases}

\begin{proof} [Proof of Theorem B in the Neumann  boundary condition  case]
We assume
$B_{r,b}u=B_{r,N}u$ in \eqref{main-random-eigenvalue}, and
$D_b=D_N(=\emptyset)$ in
\eqref{main-nonlocal-eigenvalue}.
The proof  in the Neumann boundary condition case is similar to the arguments in the Dirichlet boundary condition case 
 (it is simpler). For the completeness, we give a proof in the
following. Without loss of generality, we may also assume that $a\in \mathcal{X}_2\cap C^3(\RR\times\RR^N)$.

For the simplicity in notation, put
$$
\Phi(nT,0)=\Phi_2(nT,0; a), \quad \,\ \lambda_2^r=\lambda_2^r(a),
$$
and
$$
\Phi^\delta(nT,0)=\Phi_2^\delta(nT,0;a), \quad  \lambda_2^\delta=\lambda_2^\delta(a).
$$
By Propositions \ref{spectral-radius-nonlocal-prop} and \ref{spectral-radius-random-prop},
\begin{equation}
\label{thmb-n-eq2}
r(\Phi(T,0))=e^{\lambda_2^r T},
\end{equation}
and
\begin{equation}
\label{thmb-n-eq1}
r(\Phi^\delta(T,0))=e^{\lambda_2^\delta T}.
\end{equation}

Let  $u^r(\cdot)$ be a positive eigenfunction of $\Phi(T,0)$  corresponding
to $r(\Phi(T,0))$. By regularity for parabolic equations, $u^r\in C^3(\bar D)$.
  By Theorem A, we have
\begin{equation*}
\|\Phi^\delta(nT, 0)u^r-\Phi(nT, 0)u^r\|_{X_2}\leq C(nT,\delta),
\end{equation*}
where $C(nT,\delta)\to 0$ as $\delta\to 0$.
This implies that for all $x\in\bar D$,
\begin{align}
\label{thmb-n-eq3}
\big(\Phi^\delta(nT, 0)u^r\big)(x)&\geq \big(\Phi(nT, 0)u^r\big)(x)-C(nT,\delta)\nonumber\\
&=e^{\lambda_2^rnT}u^r(x)-C(nT,\delta)\nonumber\\
&=e^{(\lambda_2^r-\epsilon)nT} e^{\epsilon nT} u^r(x)-C(nT,\delta),
\end{align}
and
\begin{align}
\label{thmb-n-eq4}
\big(\Phi^\delta(nT, 0)u^r\big)(x)&\leq \big(\Phi(nT, 0)u^r\big)(x)+C(nT,\delta)\nonumber\\
&=e^{\lambda_2^r nT}u^r(x)+C(nT,\delta)\nonumber\\
&=e^{(\lambda_2^r+\epsilon)nT} e^{-\epsilon nT}u^r(x)+C(nT,\delta).
\end{align}
Note that
\begin{equation}
\label{thmb-n-eq5}
\min_{x\in\bar D}u^r(x)>0.
\end{equation}
Hence for any $\epsilon>0$, there is $n_1>1$ such that
\begin{equation}
\label{thmb-n-eq6}
\begin{cases}
e^{\epsilon n_1 T}u^r(x)\ge \frac{3}{2} u^r(x)\quad \forall \, x\in\bar D,\\
\\
e^{-\epsilon n_1 T} u^r(x)\le \frac{1}{2} u^r(x)\quad \forall\, x\in\bar D,
\end{cases}
\end{equation}
and  there is $\delta_0>0$ such that for any $0<\delta<\delta_0$,
\begin{equation}
\label{thmb-n-eq7}
C(n_1T)\delta<\frac{1}{2} e^{(\lambda_2^r -\epsilon)n_1T} u^r(x)\quad \forall\, x\in\bar D.
\end{equation}
By \eqref{thmb-n-eq3}-\eqref{thmb-n-eq7},
we have that for any $0<\delta<\delta_0$,
\begin{equation*}
e^{(\lambda_2^r -\epsilon)n_1T}u^r(x)\le\big(\Phi^\delta(n_1T,0)u^r\big)(x)\le e^{(\lambda_2^r+\epsilon)n_1T}u^r(x)\quad \forall\, x\in\bar D.
\end{equation*}
This together with Proposition \ref{comparison-prop} implies that for all $n\ge 1$,
$$
e^{(\lambda_2^r -\epsilon)n_1nT}u^r(x)\le\big(\Phi^\delta(n_1 nT,0)u^r\big)(x)\le e^{(\lambda_2^r+\epsilon)n_1nT}u^r(x)\quad \forall\, x\in\bar D.
$$
It then follows that for any $0<\delta<\delta_0$,
$$
e^{(\lambda_2^r-\epsilon)T}\le r(\Phi^\delta(T,0))\le e^{(\lambda_2^r+\epsilon)T}.
$$
By Proposition \ref{spectral-radius-nonlocal-prop}, we have
$$
|\lambda_2^\delta-\lambda_2^r|<\epsilon\quad \forall\, 0<\delta<\delta_0.
$$
Theorem B in the Neumann boundary condition case is thus proved.
\end{proof}

\begin{proof} [Proof of Theorem B in the periodic boundary condition case] We assume $D=\R^N$, and
$B_{r,b}u=B_{r,P}u$ in \eqref{main-random-eigenvalue}, and $B_{n,b}u=B_{n,P}u$ in
\eqref{main-nonlocal-eigenvalue}.
It can be proved by the same arguments as in the Neumann boundary condition case.
\end{proof}

\section{Approximation of Time Periodic Positive Solutions of Random Dispersal KPP Equations
by Nonlocal Dispersal KPP Equations}

In this section, we study the approximation of the asymptotic dynamics of time periodic KPP equations with random dispersal
by those of time periodic KPP equations with nonlocal dispersal. We first recall the existing results about time periodic
positive solutions of KPP equations with random as well as nonlocal dispersal. Then we prove Theorem C.
Throughout this section, we assume that  $D$ is as in (H0), and (H1), (H2) and (H2)$_\delta$ hold. Recall that, (H2) implies (H2)$_\delta$ for $\delta$ sufficiently small by Theorem B.

\subsection{Basic properties}

In this subsection, we present some basic known results for \eqref{main-random-kpp} and
\eqref{main-nonlocal-kpp}.
Let $X_1^r$, $X_2^r$, and $X_3^r$ be defined as in \eqref{x-1-r-space}, \eqref{x-2-r-space}, and \eqref{x-3-r-space},
respectively. For $u_0\in X_i^r$, let $(U(t,0)u_0)(\cdot)=u(t,\cdot;u_0)$, where $u(t,\cdot;u_0)$ is the solution
to \eqref{main-random-kpp} with $u(0,\cdot;u_0)=u_0(\cdot)$ and $B_{r,b}u=B_{r,D}u$  when $i=1$,
$B_{r,b}u=B_{r,N}u$ when $i=2$, and $B_{r,b}u=B_{r,P}u$  when $i=3$.
Similarly,  for $u_0\in X_i$, let $(U^\delta(t,0)u_0)(\cdot)=u^\delta(t,\cdot;u_0)$, where $u^\delta(t,\cdot;u_0)$ is the solution
to \eqref{main-nonlocal-kpp} with $u^\delta(0,\cdot;u_0)=u_0(\cdot)$ and $D_b=D_D(=\RR^N\backslash \bar D)$, $B_{n,b}u=B_{n,D}u$ when $i=1$,
$D_b=D_N(=\emptyset)$ when $i=2$, and  $B_{n,b}u=B_{n,P}u$ and $D_b=D_p(=\RR^N)$ when $i=3$.

\begin{proposition}
\label{positive-global-existence}
\begin{itemize}
\item[(1)] If $u_0\ge 0$, solution $u(t,\cdot;u_0)$ to
\eqref{main-random-kpp} with $u(0,\cdot;u_0)=u_0(\cdot)$ exists for all
$t\ge 0$ and $u(t,\cdot;u_0)\ge 0$ for all $t\ge 0$.

\item[(2)]  If $u_0\ge 0$, solution $u(t,\cdot;u_0)$ to
\eqref{main-nonlocal-kpp} with $u(0,\cdot;u_0)=u_0(\cdot)$ exists for all
$t\ge 0$ and $u(t,\cdot;u_0)\ge 0$ for all $t\ge 0$.
\end{itemize}
\end{proposition}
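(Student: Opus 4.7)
The plan is to deduce both parts from the comparison principle (Proposition~\ref{comparison-prop}) combined with the local well-posedness statements recalled after \eqref{main-random-general}--\eqref{main-nonlocal-general}. Fix $u_0 \ge 0$ and let $[0,T_{\max})$ denote the maximal existence interval of $u(t,\cdot;u_0)$ (respectively $u^\delta(t,\cdot;u_0)$). The two things to establish are (a) nonnegativity on this interval and (b) $T_{\max} = \infty$. Both are obtained by sandwiching the solution between two constant sub/super-solutions.

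For (a) I would observe that $\underline u \equiv 0$ is a sub-solution for both \eqref{main-random-kpp} and \eqref{main-nonlocal-kpp} under each of the three boundary conditions: the reaction term $F(t,x,u) = u f(t,x,u)$ vanishes at $u=0$, the nonlocal integral vanishes identically, and $B_{r,b} 0 = 0$, $B_{n,b} 0 = 0$ are satisfied trivially, while $\underline u(0,\cdot) = 0 \le u_0$. Proposition~\ref{comparison-prop}(1)--(2) then yields $u(t,\cdot;u_0),\, u^\delta(t,\cdot;u_0) \ge 0$ on $[0,T_{\max})$.

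For (b) I would construct a constant super-solution. By (H1), $f(t,x,u) < 0$ for $u \gg 1$ together with the $T$-periodicity in $t$ and (when $D=\RR^N$) the $p_j$-periodicity in $x$, so there exists $M > 0$ with $M \ge \|u_0\|_\infty$ and $f(t,x,M) \le 0$ for all $(t,x)$. Take $\bar u(t,x) \equiv M$ on $\RR$ (extended to all of $\RR^N$ in the nonlocal setting). For \eqref{main-random-kpp} this satisfies $\partial_t \bar u - \Delta \bar u - \bar u f(t,x,\bar u) = -M f(t,x,M) \ge 0$, and $B_{r,b}\bar u = M \ge 0$ in each boundary case. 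For \eqref{main-nonlocal-kpp}, taking $\bar u$ constant on all of $\RR^N$ makes $\int_{D\cup D_b} k_\delta(y-x)[\bar u(t,y)-\bar u(t,x)]dy = 0$, hence $\partial_t \bar u - \nu_\delta \int\cdots - M f(t,x,M) \ge 0$, and again $B_{n,b}\bar u \ge 0$. Applying Proposition~\ref{comparison-prop} gives $0 \le u(t,\cdot;u_0),\, u^\delta(t,\cdot;u_0) \le M$ on $[0,T_{\max})$. A uniform $L^\infty$ bound together with the standard continuation criterion from the semigroup framework used in Section~1 then forces $T_{\max} = \infty$.

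The only subtle point is verifying that $\bar u \equiv M$ legitimately qualifies as a super-solution for \eqref{main-nonlocal-kpp} under the Dirichlet setting $D_b = D_D$; the cleanest check is to note that the definition only requires $B_{n,D}\bar u = \bar u \ge 0$ on $\RR^N\setminus\bar D$, not equality to zero, so extending by the constant $M$ (rather than by $0$, which would destroy continuity) is admissible. Once this is noted, no serious obstacle remains, and the argument is uniform in $\delta$ and in the choice of boundary condition.
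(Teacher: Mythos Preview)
Your argument is correct and follows essentially the same route as the paper: sandwich the solution between the constant sub-solution $0$ and a constant super-solution $M\gg 1$, apply Proposition~\ref{comparison-prop} to get $0\le u\le M$ on the maximal interval, and conclude global existence from the uniform bound. The paper's proof is terser but identical in substance; your additional verification of the boundary conditions (in particular the observation that $B_{n,D}\bar u=M\ge 0$ suffices in the nonlocal Dirichlet case) is a welcome elaboration of what the paper leaves implicit.
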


\begin{proof}
(1) Note that $u(\cdot)\equiv 0$ is a sub-solution of \eqref{main-random-kpp} and $u(\cdot)\equiv M$ is a super-solution of \eqref{main-random-kpp} for
$M\gg 1$. Then by Proposition \ref{comparison-prop}, there is $M\gg 1$ such that
$$
0\le u(t,x;u_0)\le M\quad \forall\,\, x\in\bar D,\,\, t\in (0,t_{\max}),
$$
where $(0,t_{\max})$ is the  interval of existence of $u(t,\cdot;u_0)$. This implies that we must have $t_{\max}=\infty$ and hence
(1) holds.

(2) It can be proved by similar arguments as in (1).
\end{proof}

\begin{proposition}
\label{time-periodic-solu}
\begin{itemize}
\item[(1)] \eqref{main-random-kpp} has a unique globally stable positive  time
periodic  solution $u^*(t,x)$.

\item[(2)] \eqref{main-nonlocal-kpp} has a unique globally stable
time periodic positive solution $u^*_\delta(t,x)$.
\end{itemize}
\end{proposition}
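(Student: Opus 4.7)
Both parts are essentially applications of the classical theory of discrete-time monotone sub-homogeneous semiflows generated by time periodic KPP equations. My plan is to treat the two parts in parallel, since the scheme is identical once the comparison principle (Proposition 2.2) and the principal eigenvalue/spectrum information (Theorem B and Propositions 3.2--3.3) are in hand. The known references cited in the excerpt (Zhao's monograph and Nadin for part (1); Rawal--Shen for part (2)) provide the details; I would give a self-contained sketch organised around the time-$T$ map.

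First I would build a super- and a sub-solution. Because $f(t,x,u)<0$ for $u\gg 1$ by \textbf{(H1)}, there exists $M>0$ with $Mf(t,x,M)\le 0$ so that the constant $\bar u\equiv M$ is a super-solution of both \eqref{main-random-kpp} and \eqref{main-nonlocal-kpp}. For the sub-solution I would use the principal eigenvalue/eigenfunction of the linearization at $0$: in the random case $\lambda_i^r(f(\cdot,\cdot,0))>0$ by \textbf{(H2)} yields a smooth positive eigenfunction $\phi_r$, and $\epsilon\phi_r$ is a sub-solution for $\epsilon$ small; in the nonlocal case, under \textbf{(H2)$_\delta$} and using Remark 1.1, $\lambda_i^\delta(f(\cdot,\cdot,0))$ is actually a principal eigenvalue for $\delta$ small with a positive eigenfunction $\phi_\delta$, and again $\epsilon\phi_\delta$ is a sub-solution. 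Proposition 2.2 then gives that $U(nT,0)(\epsilon\phi_r)$ is nondecreasing and $U(nT,0)M$ is nonincreasing in $n$ (and similarly for $U^\delta$), and both are sandwiched between $\epsilon\phi$ and $M$.

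Next I would extract the periodic solutions. The monotone bounded sequences $U(nT,0)(\epsilon\phi_r)$ and $U(nT,0)M$ converge (by Dini-type or dominated convergence arguments, using regularity of solutions in the random case, and the abstract monotone convergence together with the equation itself in the nonlocal case) to fixed points of the time-$T$ map, which are time periodic solutions $u_*\le u^*$. The condition $\partial_u f(t,x,u)<0$ for $u\ge 0$ in \textbf{(H1)} makes the time-$T$ map strictly sub-homogeneous on the positive cone: for any $\lambda\in(0,1)$ and $u>0$, one has $U(T,0)(\lambda u)>\lambda U(T,0)u$ (strict inequality comes from the strong positivity/positivity-improving property of the semiflow after one period). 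Standard sub-homogeneous arguments (Krause--Hirsch, or \cite[Ch.~2]{Zhaox}) then force $u_*=u^*$, giving uniqueness of the positive periodic solution.

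Finally, for global stability, given an arbitrary $u_0\in X_i^+\setminus\{0\}$, the strong positivity of the semiflow after one period yields $\eta>0$ such that $\epsilon\phi\le U(T,0)u_0\le M$ for $0<\epsilon\ll 1$, and then monotonicity of $U$ and the squeeze of the two monotone orbits around $u^*$ give $U(nT,0)u_0\to u^*$ as $n\to\infty$ in $X_i$; continuity in $t\in[0,T]$ upgrades this to convergence to $u^*(t,\cdot)$. I expect the main technical obstacle to be in the nonlocal setting: the time-$T$ map $U^\delta(T,0)$ is not compact, so the passage from monotone pointwise convergence to convergence in $X_i$ and the sub-homogeneity upgrade require the strong positivity/irreducibility results established in \cite{RaSh}, rather than the Krein--Rutman machinery available in the random case. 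Once this is granted, the two parts go through identically.
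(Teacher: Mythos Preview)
The paper's own proof of this proposition is simply a pair of citations: part (1) to \cite[Theorem~3.1]{Zhaox} (and \cite{Nad1}), part (2) to \cite[Theorem~E]{RaSh}. Your sketch is a correct and faithful outline of the monotone sub-homogeneous semiflow argument that underlies those references, so in substance you are not taking a different route but rather unpacking what the paper leaves to the literature.

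One small caveat worth flagging: your sub-solution in the nonlocal case relies on the principal spectrum point $\lambda_i^\delta(f(\cdot,\cdot,0))$ actually being a principal eigenvalue with a positive eigenfunction, which you obtain via Remark~1.1 only for $\delta$ small. The proposition as stated, however, assumes merely \textbf{(H2)$_\delta$}, which can hold for $\delta$ where no principal eigenfunction exists. The result in \cite{RaSh} covers this general situation by a different sub-solution construction (perturbing $a(t,x)$ so that a principal eigenfunction does exist and using the continuity in Proposition~3.4). For the application to Theorem~C, where one only needs $\delta\ll 1$, your restriction is harmless; but if you want the full statement of part~(2) you should either invoke \cite{RaSh} at that step or reproduce their perturbation argument.
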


\begin{proof}
(1) See   \cite[Theorem 3.1]{Zhaox} (see also \cite[Theorems 1.1, 1.3]{Nad1}).

(2) See \cite[Theorem E]{RaSh}.
\end{proof}

\begin{remark}
\label{time-periodic-solu-rk}
By Proposition \ref{time-periodic-solu}(2), if there is $u_0\in X_i^+\setminus\{0\}$ such that
$(U^\delta(nT,0)u_0)(\cdot)\ge u_0(\cdot)$ for some $n\ge 1$, then we must have $\lim_{n\to\infty}(U^\delta(nT,0)u_0)(\cdot)=u^*_\delta(0,\cdot)$ and
hence
$$
(U^\delta(nT,0)u_0)(\cdot)\le u^*_\delta(0,\cdot).
$$
Similarly, if there is $u_0\in X_i^+\setminus \{0\}$ such that $(U^\delta(nT,0)u_0)(\cdot)\le u_0(\cdot)$ for some $n\ge 1$, then
$$
(U^\delta(nT,0)u_0)(\cdot)\ge u^*_\delta(0,\cdot).
$$
\end{remark}

\subsection{Proof of Theorem C in the Dirichlet boundary condition
case}

In this subsection, we prove Theorem C in the Dirichlet boundary condition case. Throughout this subsection,
we assume that $B_{r,b}u=B_{r,D}u$ in \eqref{main-random-kpp}, and $D_b=D_D$ and
$B_{n,b}u=B_{n,D}u$ in \eqref{main-nonlocal-kpp}.

 \begin{proof} [Proof  of Theorem C in the Dirichlet boundary condition
case]

First of all, note that it suffices to prove that for any $\epsilon
>0$, there is $\delta_0>0$ such that for $0<\delta<\delta_0$,
\[
 u_{\delta}^*(t, x)-\epsilon\le u^*(t, x)\leq u_{\delta}^*(t,
 x)+\epsilon\quad \forall \,\, t\in [0,T],\,\, x\in\bar D.
\]

We first show that  for any $\epsilon>0$, there is $\delta_1>0$ such that for $0<\delta<\delta_1$,
\begin{equation}
\label{thmc-d-eq0} u^*(t, x)\leq u_{\delta}^*(t, x)+\epsilon\quad \forall\,\, t\in [0,T],\,\, x\in\bar D.
\end{equation}
To this end,  choose a smooth  function $\phi_0\in C_0^\infty(D)$ satisfying that $\phi_0(x)\ge 0$ for $x\in D$ and
$\phi_0(\cdot)\not \equiv 0$. Let $0<\eta\ll 1$ be such that
\begin{equation*}
u_-(x):=\eta \phi_0(x)< u^*(0,x)\quad {\rm for}\quad x\in\bar D.
\end{equation*}
Then there is $\epsilon_0>0$ such that
\begin{equation}
\label{thmc-d-eq1-1}
u^*(0,x)\ge u_-(x)+\epsilon_0\quad {\rm for}\quad x\in {\rm supp}(\phi_0).
\end{equation}
By Proposition \ref{time-periodic-solu}, there is $N\gg 1$ such that
$$
\big(U(NT,0)u_-\big)(x)\ge u^*(NT,x)-\epsilon_0/2=u^*(0,x)-\epsilon_0/2\quad \forall\,\, x\in\bar D.
$$
By Theorem A, there is $\bar\delta_1>0$ such that for $0<\delta<\bar\delta_1$, we have
$$
\big(U^\delta(NT,0)u_-\big)(x)\ge \big(U(NT,0)u_-\big)(x)-\epsilon_0/2\quad \forall\,\, x\in\bar D.
$$
Hence for $0<\delta<\bar\delta_1$,
\begin{equation}
\label{thmc-d-eq1-2}
\big(U^\delta(NT,0)u_-\big)(x)\ge u^*(0,x)-\epsilon_0\quad \forall \,\, x\in\bar D.
\end{equation}
Note that
$$
\big(U^\delta(NT,0)u_-\big)(x)\ge 0\quad \forall\,\, x\in\bar D.
$$
It then follows  from \eqref{thmc-d-eq1-1} and \eqref{thmc-d-eq1-2} that for $0<\delta<\bar \delta_1$,
\begin{equation*}
\big(U^\delta(NT,0)u_-\big)(x)\ge u_-(x)\quad \forall \,\, x\in\bar D.
\end{equation*}
 This together with Proposition \ref{time-periodic-solu} (2) implies that
\begin{equation}
\label{thmc-d-eq1}
\big(U^\delta(NT,0)u_-\big)(x)\le u_\delta^*(0,x)\quad \forall\,\, x\in\bar D
\end{equation}
(see Remark \ref{time-periodic-solu-rk}).

 By Proposition \ref{time-periodic-solu} (1) again, for $n\gg 1$,
 \begin{equation}
 \label{thmc-d-eq2}
u^*(t, x) \leq (U(nNT+t, 0)u_-)(x)+\epsilon/2\,\,\,\, \forall \,\, t\in [0,T],\,\, x\in\bar D.
\end{equation}
Fix an $n\gg 1$ such that \eqref{thmc-d-eq2} holds.
By Theorem A, there is $0<\tilde \delta_1\le\bar \delta_1$ such that  for $0<\delta<\tilde \delta_1$,
\begin{align}
\label{thmc-d-eq3}
 (U(nNT+t, 0)u_-)(x)\leq (U^{\delta}(nNT+t, 0)u_-)(x)+C_1(\delta),
\end{align}
where $C_1(\delta)\to 0$ as $\delta\to 0$.
By \eqref{thmc-d-eq1}, Proposition \ref{comparison-prop},  and  Proposition
\ref{time-periodic-solu} (2),
\begin{equation}
\label{thmc-eq4}
\big(U^{\delta}(nNT+t, 0)u_-\big)(x)\le  \big(U^{\delta}(t, 0)u^*_{\delta}(0,\cdot)\big)(x)=u_\delta^*(t,x)
\end{equation}
for $t\in[0,T]$ and $x\in\bar D$. Let $0<\delta_1\le\tilde\delta_1$ be such that
\begin{equation}
\label{thmc-d-eq5}
C_1(\delta)<\epsilon/2\quad \forall\,\, 0<\delta<\delta_1.
\end{equation}
\eqref{thmc-d-eq0} then follows from \eqref{thmc-d-eq2}-\eqref{thmc-d-eq5}.

Next, we need to show for any $\epsilon>0$, there is $\delta_2>0$ such that for $0<\delta<\delta_2$,
\begin{equation}
\label{thmc-d-eq6} u^*(t, x)\geq u_{\delta}^*(t, x)-\epsilon\quad \forall\,\, t\in [0,T],\,\, x\in\bar D.
\end{equation}
To this end, choose a sequence of open sets $\{D_m\}$ with smooth
boundaries such that $D_1\supset  D_2 \supset  D_3 \cdots \supset  D_m \supset \cdots \supset \bar D$, and
$\bar D=\cap_{m=1}^\infty D_m$. According to
Corollary 5.11 in \cite{DanersA},  $D_m\to D$ regularly and
all assertions of Theorem 5.5 in \cite{DanersA} hold.

Consider
\begin{equation}
\label{thmc-d-eq7}
\begin{cases}
\partial _t u= \Delta u + uf(t, x, u), \quad & x\in  D_m,\\
u(t, x)=0, \quad & x \in \partial  D_m.
\end{cases}
\end{equation}
Let $U_m(t,0)u_0=u(t,\cdot;u_0)$, where $u(t,\cdot;u_0)$ is the solution to \eqref{thmc-d-eq7} with $u(0,\cdot;u_0)=u_0(\cdot)$.
By Proposition \ref{time-periodic-solu}, \eqref{thmc-d-eq7} has a unique time periodic positive solution $u_m^*(t,x)$.
We first claim that
\begin{equation}
\label{thmc-d-eq8}
\lim_{m\to\infty} u_m^*(t,x)\to u^*(t,x)\,\,\, \text{uniformly in}\,\, t\in [0,T]\,\, \text{and}\,\,  x\in\bar D.
\end{equation}

In fact, it is clear that
 $u^*\in C(\RR\times\bar D,\RR)$
and
$u_m^*\in C(\RR\times\bar D_m,\RR)$. By \cite[Theorem 7.1]{Dan},
$$
\sup_{t\in\RR}\|u_m^*(t,\cdot)-u^*(t,\cdot)\|_{L^q(D)}\to 0\quad {\rm as}\quad m\to\infty
$$
for $1\le q<\infty$.
Let $a(t,x)=f(t,x,u^*(t,x))$ and $a_m(t,x)=f(t,x,u_m^*(t,x))$. Then $u^*(t,x)$ and $u_m^*(t,x)$ are time periodic solutions
to the following linear parabolic equations,
\begin{equation}
\label{linear-eq-on-omega}
\begin{cases}
u_t=\Delta u+a(t,x)u,\quad& x\in D,\cr
u(t,x)=0,& x\in\p D,
\end{cases}
\end{equation}
and
\begin{equation}
\label{linear-eq-on-omega-n}
\begin{cases}
u_t=\Delta u+a_m(t,x)u,\quad& x\in D_m,\cr
u(t,x)=0,& x\in\p D_m,
\end{cases}
\end{equation}
respectively. Observe that there is $M>0$, such that
\[\|a\|_{L^\infty([T, 2T]\times D)}<M, \,\,  \|a_m\|_{L^\infty([T, 2T]\times D_m)}<M,\,\, \|u^*(0,\cdot)\|_{L^\infty(D)}<M,
\text{  and  }
\|u_m^*(0,\cdot)\|_{L^\infty(D_m)}<M.
\]
By \cite[Theorem D(1)]{Aro}, $\{u_m^*(t,x)\}$ is equi-continuous on $[T,2T]\times\bar D$.
Without loss of generality, we may then assume that $u_m^*(t,x)$ converges uniformly on $[T,2T]\times \bar D$.
But $u_m^*(t,\cdot)\to u^*(t,\cdot)$ in $L^q(D)$ uniformly in $t$. We then must have
$$
u_m^*(t,x)\to u^*(t,x)\quad {\rm as}\quad n\to\infty
$$
uniformly in $(t,x)\in [T,2T]\times \bar D$. This together with the time periodicity of $u_m^*$ shows that \eqref{thmc-d-eq8} holds.

Next, for any $\epsilon>0$, fix $m\gg 1$ such that
\begin{equation}
\label{thmc-d-eq9}
u^*(t,x)\ge u_m^*(t,x)-\epsilon/3\quad\forall \,\, t\in [0,T],\,\, x\in\bar D.
\end{equation}
Choose $M\gg 1$ such that for $0<\delta\le 1$,
\begin{equation}
\label{thmc-d-eq10}
M u_m^*(t,x)\ge u_\delta^*(t,x)\,\,\,\, \forall \,\, t\in [0,T],\,\, x\in \bar D.
\end{equation}
Let
$$
u_m^+(x)=M u_m^*(0,x),\quad u^+(x)=u_m^+(x)|_{\bar D}.
$$
By Proposition \ref{time-periodic-solu}, for fixed $m$ and $\epsilon$, there exists $N\gg 1$, such that
\begin{equation}
\label{thmc-d-eq11}
u_m^*(t, x)\geq \big(U_m(NT+t, 0)u_m^+\big)(x)-\epsilon/3\quad \forall\,\, t\in [0,T],\,\, x\in\bar D.
\end{equation}
By Theorem A, there is $0<\tilde\delta_2<1$   such that for $0<\delta<\tilde \delta_2$,
\begin{equation}
\label{thmc-d-eq12}
(U_m(NT+t, 0)u_m^+)(x)\geq (U_m^{\delta}(NT+t, 0)u_m^+)(x)-C_2(\delta) \quad \forall \, t\in [0, T],  \,\ x\in D_m,
\end{equation}
where $C_2(\delta)\to 0$ as $\delta\to 0$ and $(U^\delta_m(t,0)u_0)(\cdot)=u(t,\cdot;u_0)$ is the solution to
$$
\begin{cases}
u_t(t,x)=\nu_\delta\left[\int_{D_m}k_\delta(y-x)u(t,y)dy-u(t,x)\right]+u(t,x)f(t,x,u(t,x)),\quad &x\in\bar D_m\cr
u(0,x)=u_0(x),& x\in\bar D_m.
\end{cases}
$$
Let $0<\delta_2<\tilde \delta_2$ be such that for $0<\delta<\delta_2$,
\begin{equation}
\label{thmc-d-eq13}
C_2(\delta)<\epsilon/3.
\end{equation}
By Proposition \ref{comparison-prop}, for $x\in \bar D$ we have
\[
(U_m^{\delta}(NT+t, 0)u_m^+)(x)\geq (U^{\delta}(NT+t, 0)u^+)(x),
\]
and
\[
 (U^{\delta}(NT+t, 0)u^+)(x)=(U^{\delta}(t, 0)U^{\delta}(NT, 0)u^+)(x)\geq (U^{\delta}(t, 0)u_{\delta}^*(0,\cdot))(x)=u_{\delta}^*(t, x).
\]
This together with \eqref{thmc-d-eq9}, \eqref{thmc-d-eq11}, \eqref{thmc-d-eq12}, and \eqref{thmc-d-eq13} implies
\eqref{thmc-d-eq6}.

So, for any $\epsilon>0$, there exists $\delta_0=\min\{\delta_1, \delta_2\}$, such that for any $\delta<\delta_0$, we have
\[
|u^*(t, x)-u_{\delta}^*(t, x)|\leq \epsilon\,\ \text{ uniform in } t>0 \text{ and } x\in \bar D.
\]
\end{proof}

\subsection{Proofs of Theorem C in the Neumann and periodic boundary
condition cases}

In this subsection, we prove Theorem C in the Neumann and periodic boundary condition cases.

\begin{proof} [Proof of Theorem C in the Neumann boundary condition case] We assume
$B_{r,b}u=B_{r,N}u$ in \eqref{main-random-eigenvalue}, and $D_b=D_N(=\emptyset)$  in
\eqref{main-nonlocal-eigenvalue}.
The proof  in the Neumann boundary condition case is similar to the arguments in the Dirichlet boundary condition case
(it is indeed simpler).
For completeness, we provide a proof.

First,  we  show  that for any $\epsilon>0$, there is $\delta_1>0$ such that
\begin{equation}
\label{thmc-n-eq0}
u^*(t, x)\leq u_{\delta}^*(t, x)+\epsilon\quad \forall\,\, t\in [0,T],\,\, x\in\bar D,
\end{equation}
if $0<\delta<\delta_1$.
Choose a smooth function $u_-\in C^\infty(\bar D)$ with $u_-(\cdot)\ge 0$ and $u_-(\cdot)\not\equiv 0$ such that
$$
u_-(x)<u^*(0,x)\quad \forall \,\, x\in \bar D.
$$
Then there is $\epsilon_0>0$ such that
\begin{equation}
\label{thmc-n-eq1-0}
u^*(0,x)\ge u_-(x)+\epsilon_0\quad \forall \,\, x\in\bar D.
\end{equation}
By Proposition \ref{time-periodic-solu} (1), there is $N\gg 1$ such that
\begin{equation}
\label{thmc-n-eq1-1}
\big(U(NT,0)u_-\big)(x)\ge u^*(0,x)-\epsilon_0/2\quad \forall\,\, x\in\bar D.
\end{equation}
By Theorem A, there is $\bar\delta_1>0$ such that for $0<\delta<\bar\delta_1$,
\begin{equation}
\label{thmc-n-eq1-2}
(U^\delta(NT,0)u_-)(x)\ge (U(NT,0)u_-)(x)-\epsilon_0/2\quad \forall\,\, x\in\bar D.
\end{equation}
By \eqref{thmc-n-eq1-0}, \eqref{thmc-n-eq1-1} and \eqref{thmc-n-eq1-2},
\begin{equation*}
\big(U^\delta(NT,0)u_-\big)(x)\ge u_-(x)\quad \forall\,\, x\in\bar D,
\end{equation*}
and then  by Proposition \ref{time-periodic-solu} (2),
\begin{equation}
\label{thmc-n-eq1}
\big(U^\delta(NT,0)u_-\big)(x)\le u_\delta^*(0,x)\quad \forall\,\, x\in\bar D.
\end{equation}

By
Proposition \ref{time-periodic-solu} (1) again, for any given $\epsilon>0$, $n\gg 1$, and
$0<\delta<\bar\delta_1$,
 \begin{align}
 \label{thmc-n-eq2}
u^*(t, x)
&\leq (U(nNT+t, 0)u_-)(x)+\epsilon/2 \quad \forall\,\, t\in [0,T],\,\, x\in\bar D.
\end{align}
By Theorem A, there is $0< \delta_1\le\bar\delta_1$  such that
for $\delta<\delta_1$,
\begin{align}
\label{thmc-n-eq3}
 (U(nNT+t, 0)u_-)(x)\leq (U^{\delta}(nNT+t, 0)u_-)(x)+\frac{\epsilon}{2}\quad \forall\,\, t\in [0,T],\,\, x\in\bar D.
\end{align}
By Proposition \ref{comparison-prop}  and \eqref{thmc-n-eq1}, we have
\begin{equation}
\label{thmc-n-eq4}
(U^{\delta}(nNT+t, 0)u_-)(x)=(U^{\delta}(t, 0)U^{\delta}(nNT, 0)u_-)(x)\leq(U^{\delta}(t, 0)u^*_{\delta}(t, \cdot))(x)=u_\delta^*(t,x)
\end{equation}
for  $t\in [0,T]$ and $x\in\bar D$. \eqref{thmc-n-eq0} then follows from \eqref{thmc-n-eq2}-\eqref{thmc-n-eq4}.

Next, we show that for any $\epsilon>0$, there is $\delta_2>0$ such that for $0<\delta<\delta_2$,
\begin{equation}
\label{thmc-n-eq6}
u^*(t, x)\geq u_{\delta}^*(t, x)-\epsilon\quad \forall\,\, t\in [0,T],\,\, x\in\bar D.
\end{equation}
Choose $M\gg 1$ such that $f(t,x,M)<0$ for $t\in\RR$ and $x\in \bar D$. Put
$$
u^+(x)=M\quad \forall\,\, x\in\bar D.
$$
Then for all $\delta>0$,
\begin{equation}
\label{thmc-n-eq7}
u_\delta^*(0,x)\le u^+(x)\quad \forall\,\, x\in\bar D.
\end{equation}
By Proposition \ref{time-periodic-solu}, there is $N\gg 1$ such that
\begin{equation}
\label{thmc-n-eq8}
u^*(t, x)\geq (U(NT+t, 0)u^+)(x)-\epsilon/2\quad \forall\,\, t\in [0,T],\,\, x\in\bar D.
\end{equation}
By Theorem A, there is $\delta_2>0$  such that for $0<\delta<\delta_2$,
\begin{equation}
\label{thmc-n-eq9}
(U(NT+t, 0)u^+)(x)\geq (U^{\delta}(NT+t, 0)u^+)(x)-\frac{\epsilon}{2}\quad \forall\,\, t\in [0,T],\,\, x\in\bar D.
\end{equation}
By \eqref{thmc-n-eq7},
\begin{equation}
\label{thmc-n-eq11}
 (U^{\delta}(NT+t, 0)u^+)(x)=(U^{\delta}(t, 0)U^{\delta}(NT, 0)u^+)(x)\geq (U^{\delta}(t, 0)u_{\delta}^*(t, \cdot))(x)=u_{\delta}^*(t, x)
\end{equation}
for  $t\in [0,T]$ and $x\in\bar D$. \eqref{thmc-n-eq6} then follows from \eqref{thmc-n-eq8}-\eqref{thmc-n-eq11}.

So, for any $\epsilon>0$, there exists $\delta_0=\min\{\delta_1, \delta_2\}$, such that for any $0<\delta<\delta_0$, we have
\[
|u^*(t, x)-u_{\delta}^*(t, x)|\leq \epsilon\,\ \text{ uniform in } t>0 \text{ and } x\in \bar D.
\]
\end{proof}

\begin{proof}[Proof of Theorem C in the periodic boundary condition case] We assume $D=\R^N$, and
$B_{r,b}u=B_{r,P}u$ in \eqref{main-random-eigenvalue}, and $B_{n,b}u=B_{n,P}u$ in
\eqref{main-nonlocal-eigenvalue}.
It can be proved by the similar arguments as in the Neumann boundary condition case.
\end{proof}

\noindent {\bf Acknowledgments.}  The authors would like to thank the referees for the valuable comments and suggestions which
improved the presentation of this paper considerably.

\end{document}